\numberwithin{equation}{section}
\numberwithin{figure}{section}
\theoremstyle{plain}
\newtheorem{thm}{\protect\theoremname}[section]
  \theoremstyle{plain}
  \newtheorem{prop}[thm]{\protect\propositionname}
  \theoremstyle{definition}
  \newtheorem{defn}[thm]{\protect\definitionname}
  \theoremstyle{remark}
  \newtheorem{rem}[thm]{\protect\remarkname}
  \theoremstyle{plain}
  \newtheorem{lem}[thm]{\protect\lemmaname}
  \theoremstyle{definition}
  \newtheorem{example}[thm]{\protect\examplename}
  \theoremstyle{definition}
  \providecommand{\definitionname}{Definition}
  \providecommand{\examplename}{Example}
  \providecommand{\lemmaname}{Lemma}
  \providecommand{\propositionname}{Proposition}
  \providecommand{\remarkname}{Remark}
  \providecommand{\corollaryname}{Corollary}
\providecommand{\theoremname}{Theorem}
\DeclareMathOperator*{\bigtimes}{\times}
\begin{document}

\title{Classifying spaces of algebras over a prop}

\address{Laboratoire Paul Painlevé, Université de Lille 1, Cité Scientifique,
59655 Villeneuve d'Ascq Cedex, France}
\email{Sinan.Yalin@ed.univ-lille1.fr}

\author{Sinan Yalin}
\begin{abstract}
We prove that a weak equivalence between cofibrant props induces a weak equivalence between
the associated classifying spaces of algebras. This statement generalizes to the
prop setting a homotopy invariance result which is well known
in the case of algebras over operads. The absence of model category structure
on algebras over a prop leads us to introduce new methods to overcome this difficulty.
We also explain how our result can be extended to algebras over colored
props in any symmetric monoidal model category tensored over chain
complexes.

\textit{Keywords} : props, classifying spaces, moduli spaces, bialgebras category, homotopical algebra, homotopy invariance.

\textit{AMS} : 18G55 ; 18D50 ; 18D10 ; 55U10.

\end{abstract}
\maketitle

\section*{Introduction}

\bigskip{}

The notion of a prop has been introduced by MacLane in algebra \cite{MLa2}.
The name prop is actually an acronym for ``product and permutation''.
Briefly, a prop $P$ is a double sequence of objects $P(m,n)$ whose
elements represent operations with $m$ inputs and $n$ outputs.

Certain categories of algebras, like associative, Poisson or Lie algebras,
have a structure which is fully determined by operations with a single
output. These categories are associated to props $P$ of a certain
form, where operations in components $P(m,1)$ generate the prop.
Boardman and Vogt coined the name ``categories of operators of standard
form'' to refer to props of this particular form \cite{BV1}. Peter May introduced
the axioms of operads to deal with the components $P(m,1)$ which
define the core of such prop structures \cite{May2}. The work of these authors was
initially motivated by the theory of iterated loop spaces, in
topology (see \cite{BV2} and \cite{May2}). Operads have now proved to be a powerful device to handle a
variety of algebraic structures occurring in many branches of
mathematics.

However, if one wants to deal with bialgebras it becomes necessary
to use general props instead of operads. Important examples appeared in particular
in mathematical physics and string topology : the Frobenius bialgebras
(whose category is equivalent to the category of two-dimensional topological
quantum field theories), the topological conformal field theories
(which are algebras over the chain Segal prop), or the Lie bialgebras
introduced by Drinfeld in quantization theory are categories of bialgebras associated to props.

The purpose of this article is to set up a theory for the homotopical
study of algebras over a (possibly colored) prop. In a seminal series
of papers at the beginning of the 80's, Dwyer and Kan investigated
the simplicial localization of categories. They proved that the simplicial
localization gives a good device to capture secondary homology structures
usually defined in the framework of Quillen's model categories (see \cite{DK}).
An important homotopy invariant of a model category is its classifying space, defined as the nerve of its
subcategory of weak equivalences. The interest of such a classifying space has been
shown in the work of Dwyer and Kan \cite{DK}, who proved that this classifying space encodes information about
the homotopy types of the objects and their internal symmetries, i.e their
homotopy automorphisms.
They also proved that such a classifying space is homotopy invariant under Quillen
equivalences of model categories.

The algebras over an operad in a model category themselves form, under
suitable assumptions, a model category. A consequence of usual results about
model categories is that the classifying space of such a category is homotopy invariant up
to the weak homotopy type of the underlying operad. Unfortunately, there is no model category
structure on the algebras over a prop in general. We cannot handle our motivating examples of bialgebras
occurring in mathematical physics and string topology by using this approach, and we aim to overcome this difficulty.

The basic problem is to compare categories of algebras over a prop.
In order to bypass difficulties due to the absence of model structure on these algebras,
our overall strategy is to stay at the prop level as far as possible, and to use
factorization and lifting properties in the model category of props.
The structure of an algebra over a prop $P$ can be encoded by a prop
morphism $P \rightarrow End_A$, where $End_A$ is the endomorphism prop
associated to $A$. One can construct a version of endomorphism props modeling
$P$-algebra structures on diagrams. We can in particular use these diagrams endomorphisms props
to define path objects in the category of $P$-algebras. But we need an analogue of this
device for a variable $P$-algebra $A$, not a fixed object. The idea is to perform such
a construction on the abstract prop $P$ itself before moving to endomorphism props.
Combining this ``prop of $P$-diagrams'' construction with lifting and factorization techniques,
we endow the category of $P$-algebras with functorial path objects.

Consequently, the first main outcome of our study is the following homotopy invariance theorem.
Let $Ch_{\mathbb{K}}$ be the category of $\mathbb{Z}$-graded chain complexes
over a field $\mathbb{K}$ of characteristic zero. Let $(Ch_{\mathbb{K}})^P$ be the category of algebras associated to a prop $P$
in this category, and $w(Ch_{\mathbb{K}})^P$ its subcategory obtained by restriction to morphisms which are
weak equivalences in $Ch_{\mathbb{K}}$. Our result reads:

\medskip{}

\begin{thm}
Let $\varphi:P\stackrel{\sim}{\rightarrow}Q$
be a weak equivalence between two cofibrant props. The map $\varphi$
gives rise to a functor $\varphi^{*}: w(Ch_{\mathbb{K}})^Q\rightarrow w(Ch_{\mathbb{K}})^P$
which induces a weak equivalence of simplicial sets $\mathcal{N}\varphi^{*}:\mathcal{N} w(Ch_{\mathbb{K}})^Q\stackrel{\sim}{\rightarrow}\mathcal{N} w(Ch_{\mathbb{K}})^P$.
\end{thm}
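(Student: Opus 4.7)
The plan is to reduce the theorem to two elementary cases via the functorial factorization in the model category of props, construct quasi-inverses in each case using the fibrancy of endomorphism props together with the lifting axiom, and compare the resulting compositions with identities through the functorial path objects on $P$-algebras announced in the introduction.

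First, $\varphi^{*}$ is well defined as a functor of $w$-categories because pulling back a $Q$-algebra structure along $\varphi$ leaves the underlying chain complex unchanged and is the identity on underlying maps, so weak equivalences in $Ch_{\mathbb{K}}$ are automatically preserved. Using the functorial factorization in the model category of props I factor $\varphi=q\circ j$ with $j:P\hookrightarrow R$ an acyclic cofibration and $q:R\twoheadrightarrow Q$ an acyclic fibration, and arrange that $R$ is cofibrant. Since $\mathcal{N}\varphi^{*}=\mathcal{N}j^{*}\circ\mathcal{N}q^{*}$, it suffices to prove the theorem when $\varphi$ is an acyclic cofibration between cofibrant props and when $\varphi$ is an acyclic fibration between cofibrant props.

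In the acyclic fibration case $q:R\twoheadrightarrow Q$, cofibrancy of $Q$ produces a section $s:Q\to R$ with $q\circ s=\mathrm{id}_{Q}$, and I define $\psi:w(Ch_{\mathbb{K}})^{R}\to w(Ch_{\mathbb{K}})^{Q}$ by sending an $R$-algebra $\alpha:R\to End_{A}$ to the $Q$-algebra $\alpha\circ s$. Then $q^{*}\circ\psi$ is the identity, while $\psi\circ q^{*}$ sends $\alpha$ to $\alpha\circ s\circ q$; the maps $\mathrm{id}_{R}$ and $s\circ q$ are two lifts of the same square in the model category of props, hence are right-homotopic, and this prop-level homotopy descends to a natural weak equivalence between $R$-algebra structures on the same underlying complex. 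In the acyclic cofibration case $j:P\hookrightarrow R$, the fibrancy of $End_{A}$ in the projective model structure together with the lifting axiom gives for any $\alpha:P\to End_{A}$ an extension $\tilde{\alpha}:R\to End_{A}$; applying the same lifting to the prop encoding $P$-algebra diagrams makes the construction functorial in morphisms of $P$-algebras. The comparisons $\psi\circ j^{*}$ and $j^{*}\circ\psi$ with identity again reduce to homotopies between alternative lifts, which translate into pointwise weak equivalences via the functorial path object.

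The main obstacle is making all these lifts and homotopies functorial in the variable algebra. Without a model structure on $P$-algebras, the natural transformations $\psi\circ j^{*}\Rightarrow\mathrm{id}$ and $j^{*}\circ\psi\Rightarrow\mathrm{id}$ must be built one algebra at a time, and the choices have to cohere with every morphism $f:A\to B$ of $P$-algebras, whose structure lives on the two-sided endomorphism prop $End_{A,B}$ rather than on $End_{A}$ and $End_{B}$ separately. The prop-of-diagrams technology is precisely the device that simultaneously lifts structures on $A$, on $B$ and on $f$, and the resulting natural pointwise weak equivalences, combined with the standard fact that a natural transformation through weak equivalences induces a simplicial homotopy on nerves, finally yield that $\mathcal{N}\varphi^{*}$ admits a homotopy inverse.
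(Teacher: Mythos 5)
Your overall strategy---reduce to elementary classes of maps, build quasi-inverse functors from prop-level sections and retractions, and compare the composites with the identity by converting homotopies of prop morphisms into zigzags of pointwise weak equivalences via the functorial path object---is the same as the paper's. Your acyclic fibration case is essentially right (modulo a swap: with your conventions it is $\psi\circ q^{*}$ that equals the identity and $q^{*}\circ\psi$ that is induced by $s\circ q$), and your explicit factorization $\varphi=q\circ j$ plays the role that Brown's lemma plays in the paper, applied there to the contravariant functor $P\mapsto\mathcal{N}w(Ch_{\mathbb{K}})^{P}$ after treating only the acyclic cofibration case.

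The genuine gap is in your acyclic cofibration case. You propose to invert $j^{*}$ by extending each structure $\alpha:P\rightarrow End_{A}$ to some $\tilde{\alpha}:R\rightarrow End_{A}$ by lifting against $j$. Each such lift is a non-canonical choice made one algebra at a time, and the choices cannot be made to cohere with morphisms: lifting the structure on the endomorphism prop of a diagram $\{A\rightarrow B\}$ pins down extensions on $A$ and $B$ adapted to that one morphism, but a second morphism $g:A\rightarrow C$ would force a possibly different extension on $A$, and compatibility with composition fails as well. The prop-of-$P$-diagrams device does not repair this: in the paper it is attached to one fixed functorial diagram shape (the path object $X\twoheadleftarrow Z\otimes X\twoheadrightarrow X$, built from the functor $-\otimes Z$ on underlying complexes), not to the whole category of $P$-algebras with all its morphisms at once. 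The correct move, and the one the paper makes, is to perform a single lift at the level of abstract props: since every chain complex is fibrant and fibrations of props are componentwise, every prop in $Ch_{\mathbb{K}}$ is fibrant, so the acyclic cofibration $j:P\rightarrow R$ between cofibrant props admits a retraction $r:R\rightarrow P$ with $r\circ j=id_{P}$ and $j\circ r\sim id_{R}$. The quasi-inverse is then the precomposition functor $r^{*}$, which is functorial for free, and the path-object machinery is needed only to show that the homotopic endomorphisms $j\circ r$ and $id_{R}$ of the cofibrant prop $R$ induce homotopic self-maps of the nerve. With that correction your argument coincides with the paper's proof.
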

\medskip{}
We can remove the hypothesis about the characteristic of $\mathbb{K}$ if
we suppose that $P$ is a prop with non-empty inputs or outputs (see
Definition 1.12 and Theorem 1.13). We explain in Section 2.7 how to extend
Theorem 0.1 to the case of a category tensored over $Ch_{\mathbb{K}}$. In
Section 3, we also briefly show that the proof of Theorem 0.1 extends readily
to the colored props context if we suppose that $\mathbb{K}$ has characteristic
zero (this hypothesis is needed to put a model category structure on colored props in $Ch_{\mathbb{K}}$,
see the work of Johnson and Yau\cite{JY}).
Recall that examples include cofibrant resolutions of the props encoding associative bialgebras, Lie bialgebras,
Frobenius bialgebras or Poisson bialgebras for instance. Algebras over a cofibrant resolution of a given
prop $P$ are called homotopy $P$-algebras. Theorem 0.1 implies that the classifying space
does not depend on the choice of the cofibrant resolution, and thus provides a well defined
homotopy invariant.

\begin{rem}
We do not address the case of simplicial sets. However, Theorem 1.4
in \cite{JY} endows the algebras over a colored prop in simplicial
sets with a model category structure. Moreover, the free algebra functor
exists in this case. Therefore one can transpose the methods used
in the operadic setting to obtain a simplicial version of Theorem
0.1. We also conjecture that our results
have a version in simplicial modules which follows from arguments
similar to ours.\end{rem}

\medskip{}

\textit{Organization}: the overall setting is reviewed in Section
1. We recall some definitions about symmetric monoidal categories
over a base category and axioms of monoidal model categories. Then
we introduce the precise definition of props and algebras over a prop.
We conclude these preliminaries with a fundamental result, the existence
of a model structure on the category of props.

The heart of this paper consists of Section 2, devoted to the proof of Theorem
0.1. The proof of Theorem 0.1 is quite long
and has been consequently divided in several steps.
Section 2.1 gives a sketch of our main arguments. In Sections 2.3, 2.4 and 2.5, we define particular props
called props of $P$-diagrams, which allow us to build a functorial path object in $P$-algebras.
In Section 2.6, we give a proof of Theorem 0.1.
At the end of Section 2 we generalize Theorem 0.1 to categories
tensored over $Ch_{\mathbb{K}}$.
Finally, we quickly present in Section 3 the extension of our arguments to colored props.

\medskip{}

\textbf{Acknowledgements.} I would like to thank my advisor Benoit Fresse for his useful and constant help.
I am grateful to Jim Stasheff and Eric Hoffbeck for their relevant remarks on the first version of this paper.
I also wish to thank David Chataur for stimulating discussions about these moduli spaces.
I thank the referee for his thorough report and detailed suggestions.

\medskip{}

\section{Recollections and general results}

\bigskip{}

\subsection{Symmetric monoidal categories over a base category}
\begin{defn}
Let $\mathcal{C}$ be a symmetric monoidal category. A \emph{symmetric monoidal
category over $\mathcal{C}$} is a symmetric monoidal category $(\mathcal{E},\otimes_{\mathcal{E}},1_{\mathcal{E}})$
endowed with a symmetric monoidal functor $\eta:\mathcal{C}\rightarrow\mathcal{E}$,
that is, an object under $\mathcal{C}$ in the $2$-category of symmetric monoidal categories.

This defines on $\mathcal{E}$ an external tensor product $\otimes :\mathcal{C}\times\mathcal{E}\rightarrow\mathcal{E}$
by $C\otimes X = \eta(C)\otimes_{\mathcal{E}} X$ for every $C\in\mathcal{C}$ and $X\in\mathcal{E}$.
This external tensor product is equipped with the following natural unit, associativity and symmetry isomorphisms:

(1) $\forall X\in\mathcal{E},1_{\mathcal{C}}\otimes X\cong X$,

(2) $\forall X\in\mathcal{E},\forall C,D\in\mathcal{C},(C\otimes D)\otimes X\cong C\otimes (D\otimes X)$,

(3) $\forall C\in\mathcal{C},\forall X,Y\in\mathcal{E},C\otimes (X\otimes Y)\cong(C\otimes X)\otimes Y\cong X\otimes(C\otimes Y)$.

The coherence constraints of these natural isomorphisms
(associativity pentagons, symmetry hexagons and unit triangles which mix both internal and external tensor products)
come from the symmetric monoidal structure of the functor $\eta$.

We will implicitly assume throughout the paper that all small limits and small
colimits exist in $\mathcal{C}$ and $\mathcal{E}$, and that each of these categories admit an internal hom bifunctor.
We suppose moreover the existence of an
external hom bifunctor $Hom_{\mathcal{E}}(-,-):\mathcal{E}^{op}\times\mathcal{E}\rightarrow\mathcal{C}$
satisfying an adjunction relation
\[
\forall C\in\mathcal{C},\forall X,Y\in\mathcal{E},Mor_{\mathcal{E}}(C\otimes X,Y)\cong Mor_{\mathcal{C}}(C,Hom_{\mathcal{E}}(X,Y))
\]
(so $\mathcal{E}$ is naturally an enriched category over $\mathcal{C}$).
\end{defn}
\medskip{}
\textbf{Examples.}

(1) The differential graded $\mathbb{K}$-modules (where $\mathbb{K}$ is a commutative ring) form a symmetric monoidal
category over the $\mathbb{K}$-modules. This is the main category used in this paper.

(2) Any symmetric monoidal category $\mathcal{C}$ forms
a symmetric monoidal category over $Set$ (the category of sets) with
an external tensor product defined by
\begin{align*}
\otimes :Set\times\mathcal{C} & \rightarrow\mathcal{C}\\
(S,C) & \mapsto\bigoplus_{s\in S}C.
\end{align*}

(3) Let $I$ be a small category; the $I$-diagrams in a symmetric monoidal category $\mathcal{C}$ form a symmetric monoidal
category over $\mathcal{C}$. The internal tensor product is defined pointwise, and the external tensor product
is defined by the functor $\eta$ which associates to $X\in\mathcal{C}$ the constant $I$-diagram $C_X$ on $X$.
The external hom $Hom_{\mathcal{C}^I}(-,-):\mathcal{C}^I\times\mathcal{C}^I\rightarrow\mathcal{C}$
is given by
\[
Hom_{\mathcal{C}^I}(X,Y)=\int_{i\in I}Hom_{\mathcal{C}}(X(i),Y(i)).
\]

\medskip{}

\begin{prop}
Let $F:\mathcal{D}\rightleftarrows\mathcal{E}:G$ be a symmetric monoidal adjunction
between two symmetric monoidal categories over $\mathcal{C}$. If $F$ preserves
the external tensor product then $F$ and $G$ satisfy an enriched
adjunction relation
\[
Hom_{\mathcal{E}}(F(X),Y)\cong Hom_{\mathcal{D}}(X,G(Y))
\]
 at the level of the external hom bifunctors (see \cite[Proposition 1.1.16]{Fre2}).
\end{prop}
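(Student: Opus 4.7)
The plan is to prove the claimed isomorphism by a Yoneda-style argument, reducing everything to the already-known adjunctions at the level of morphism sets in $\mathcal{C}$. The external hom bifunctors in $\mathcal{D}$ and $\mathcal{E}$ are defined up to canonical isomorphism by their universal property with respect to the external tensor product, so it suffices to produce a natural isomorphism between the represented functors $Mor_{\mathcal{C}}(-,Hom_{\mathcal{E}}(F(X),Y))$ and $Mor_{\mathcal{C}}(-,Hom_{\mathcal{D}}(X,G(Y)))$ on $\mathcal{C}^{op}$.

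Concretely, I fix $X \in \mathcal{D}$, $Y \in \mathcal{E}$ and an arbitrary object $C \in \mathcal{C}$, then chain the following four natural bijections:
\begin{align*}
Mor_{\mathcal{C}}(C,Hom_{\mathcal{E}}(F(X),Y)) &\cong Mor_{\mathcal{E}}(C\otimes F(X),Y) \\
&\cong Mor_{\mathcal{E}}(F(C\otimes X),Y) \\
&\cong Mor_{\mathcal{D}}(C\otimes X,G(Y)) \\
&\cong Mor_{\mathcal{C}}(C,Hom_{\mathcal{D}}(X,G(Y))).
\end{align*}
The first and last bijections come from the defining adjunction of the external hom bifunctors in $\mathcal{E}$ and $\mathcal{D}$ respectively. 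The middle isomorphism of hom sets uses the hypothesis that $F$ preserves the external tensor product, i.e.\ that the canonical map $F(C\otimes X) \to C \otimes F(X)$ coming from the symmetric monoidal structure of $F$ and the equality $\eta_{\mathcal{E}} = F\circ \eta_{\mathcal{D}}$ (compatibility of $F$ with the structural functors from $\mathcal{C}$) is an isomorphism. The third bijection is simply the ordinary adjunction $F\dashv G$.

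Each bijection is natural in $C$, so the composite is a natural isomorphism of functors $\mathcal{C}^{op} \to Set$, and the Yoneda lemma then gives the required isomorphism $Hom_{\mathcal{E}}(F(X),Y) \cong Hom_{\mathcal{D}}(X,G(Y))$ in $\mathcal{C}$. Naturality in $X$ and $Y$ follows from naturality of each bijection in those variables as well. The only subtle point, and the one I would verify most carefully, is the compatibility underlying the second bijection: one has to check that the isomorphism $F(C\otimes X) \cong C\otimes F(X)$ supplied by the symmetric monoidal structure of $F$ is coherent with the external tensor products on both sides, which reduces to unpacking the definition of a symmetric monoidal functor between symmetric monoidal categories over $\mathcal{C}$. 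Once this is granted, the rest of the argument is a routine unwinding of adjunctions.
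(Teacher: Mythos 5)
Your proof is correct and is essentially the argument the paper delegates to the cited reference (Proposition 1.1.16 of Fresse's book): chain the defining adjunctions of the external hom bifunctors with the adjunction $F\dashv G$ and the hypothesis $F(C\otimes X)\cong C\otimes F(X)$, then conclude by Yoneda. The paper itself gives no proof beyond the citation, and your write-up, including the care taken over the coherence of the middle isomorphism, matches the standard argument.
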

\medskip{}

We now deal with symmetric monoidal categories equipped with a model
structure. We assume that the reader is familiar with the basics of model categories.
We refer to the paper of Dwyer and Spalinski \cite{DS} for a complete and accessible
introduction, and to Hirschhorn \cite{Hir1} and Hovey \cite{Hov} for a comprehensive
treatment. We just recall the axioms of symmetric monoidal model categories
formalizing the interplay between the tensor and the model structures.

\medskip{}

\begin{defn}
Let $\mathcal{C}$ be a category with small colimits and $F:\mathcal{A}\times\mathcal{B}\rightarrow\mathcal{C}$
a bifunctor. The \emph{pushout-product} of two morphisms $f:A\rightarrow B\in\mathcal{A}$
and $g:C\rightarrow D\in\mathcal{B}$ is the morphism
\[
(f_{*},g_{*}):F(A,D)\oplus_{F(A,C)}F(B,C)\rightarrow F(B,D)
\]
given by the commutative diagram:

\[
\xymatrix{F(A,C)\ar[d]_{F(A,g)}\ar[r]^{F(f,C)} & F(B,C)\ar[d]\ar@/^{1pc}/[rdd]^{(F(B,g)}\\
F(A,D)\ar@/_{1pc}/[drr]_{F(f,D)}\ar[r] & F(A,D)\oplus_{F(A,C)}F(B,C)\ar[dr]^{(f_{*},g_{*})}\\
 &  & F(B,D)
}.
\]

\end{defn}
\medskip{}

\begin{defn}
(1) A \emph{symmetric monoidal model category} is a symmetric monoidal category
$\mathcal{C}$ equipped with a model category structure such that
the following axiom holds:

\textbf{MM1.} The pushout-product $(i_{*},j_{*}):A\otimes D\oplus_{A\otimes C}B\otimes C\rightarrow B\otimes D$
of cofibrations $i:A\rightarrowtail B$ and $j:C\rightarrowtail D$ is a cofibration
which is also acyclic as soon as $i$ or $j$ is so.

(2) Suppose that $\mathcal{C}$ is a symmetric monoidal model category.
A symmetric monoidal category $\mathcal{E}$ over $\mathcal{C}$ is
a symmetric monoidal model category over $\mathcal{C}$ if the axiom MM1 holds for both the internal and external tensor
products of $\mathcal{E}$.
\end{defn}
\medskip{}
\textbf{Example:} the category $Ch_{\mathbb{K}}$ of chain complexes over
a field $\mathbb{K}$ is our main working example of symmetric monoidal model category.
The weak equivalences of $Ch_{\mathbb{K}}$ are the quasi-isomorphisms, that is, the morphisms of chain complexes
inducing isomorphisms of graded vector spaces at the homology level. The fibrations are the degreewise surjections
and the cofibrations the degreewise injections.

\medskip{}

\begin{lem}
In a symmetric monoidal model category $\mathcal{E}$ over $\mathcal{C}$
the axiom MM1 for the external tensor product is equivalent to the
following one:

\textbf{MM1'.} The morphism
\[
(i^{*},p_{*}):Hom_{\mathcal{E}}(B,X)\rightarrow Hom_{\mathcal{E}}(A,X)\times_{Hom_{\mathcal{E}}(A,Y)}Hom_{\mathcal{E}}(B,Y)
\]
 induced by a cofibration $i:A\rightarrowtail B$ and a fibration
$p:X\twoheadrightarrow Y$ is a fibration in $\mathcal{C}$ which
is also acyclic as soon as $i$ or $p$ is so (cf. Lemma 4.2.2 in \cite{Hov}).
\end{lem}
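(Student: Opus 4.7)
The lemma is the external-adjunction analog of the classical Joyal--Tierney / SM7 equivalence between the pushout-product and pullback-hom formulations of the monoidal axiom (cf.\ Lemma 4.2.2 in \cite{Hov}). My plan has two steps.

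First, I would set up a correspondence of lifting problems. For any morphisms $f:A\to B$ in $\mathcal{C}$, $j:C\to D$ in $\mathcal{E}$, and $p:X\to Y$ in $\mathcal{E}$, there is a natural bijection between commutative squares in $\mathcal{E}$ posing a lifting problem of the pushout-product $(f_{*},j_{*}):A\otimes D\oplus_{A\otimes C}B\otimes C\to B\otimes D$ against $p$, and commutative squares in $\mathcal{C}$ posing a lifting problem of $f$ against the pullback-hom $(j^{*},p_{*}):Hom_{\mathcal{E}}(D,X)\to Hom_{\mathcal{E}}(C,X)\times_{Hom_{\mathcal{E}}(C,Y)}Hom_{\mathcal{E}}(D,Y)$, under which fillers on one side correspond to fillers on the other. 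This I would obtain by applying the universal property of the pushout at the source, the universal property of the pullback at the target, and the external adjunction $Mor_{\mathcal{E}}(C\otimes X,Y)\cong Mor_{\mathcal{C}}(C,Hom_{\mathcal{E}}(X,Y))$ componentwise; the key point is that the compatibility condition identifying two maps out of the pushout transports under the adjunction to precisely the compatibility condition identifying two maps into the pullback.

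Second, I would deduce the equivalence of MM1 and MM1' using the standard characterization of (trivial) cofibrations and fibrations by lifting properties. For MM1 $\Rightarrow$ MM1', fix a cofibration $j:C\rightarrowtail D$ and a fibration $p:X\twoheadrightarrow Y$ in $\mathcal{E}$. To show $(j^{*},p_{*})$ is a fibration in $\mathcal{C}$, let $f$ be any trivial cofibration in $\mathcal{C}$; by MM1 the pushout-product $(f_{*},j_{*})$ is a trivial cofibration in $\mathcal{E}$ and hence lifts against $p$, which by the first step provides a lift of $f$ against $(j^{*},p_{*})$. The acyclic cases follow the same pattern with $f$ ranging over all cofibrations in $\mathcal{C}$: if $j$ is acyclic then MM1 still makes $(f_{*},j_{*})$ a trivial cofibration, while if $p$ is acyclic then MM1 makes $(f_{*},j_{*})$ a cofibration which lifts against the acyclic fibration $p$. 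The reverse implication MM1' $\Rightarrow$ MM1 is entirely symmetric, testing the pushout-product directly by its LLP against fibrations and transporting through the first step.

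The main obstacle is the bookkeeping of the first step: one must carefully verify that the adjunction bijection respects the commutativity constraints of the squares, so that the pushout compatibility at the source of $(f_{*},j_{*})$ in $\mathcal{E}$ is matched, term by term, with the pullback compatibility at the target of $(j^{*},p_{*})$ in $\mathcal{C}$. This is a completely formal but slightly tedious diagram chase. Once it is established, the second step is routine manipulation of lifting properties.
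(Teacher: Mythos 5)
Your proposal is correct and is exactly the argument the paper delegates to its citation (Hovey, Lemma~4.2.2): the adjunction $Mor_{\mathcal{E}}(C\otimes X,Y)\cong Mor_{\mathcal{C}}(C,Hom_{\mathcal{E}}(X,Y))$ identifies lifting problems of the pushout-product against $p$ with lifting problems of $f$ against the pullback-hom, and the equivalence of MM1 and MM1' then follows from the characterization of (acyclic) cofibrations and fibrations by lifting properties. Nothing is missing; this is the intended proof.
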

\medskip{}
One can use the internal hom bifunctor to see that the axiom MM1 for the internal tensor product is in the
same way equivalent to a ``dual'' axiom MM1'.

\medskip{}

\subsection{On $\Sigma$-bimodules, props, and algebras over a prop}

Let $\mathcal{C}$ be a symmetric monoidal category admitting all
small limits and small colimits, whose tensor product preserves colimits,
and which is endowed with an internal hom bifunctor. Let $\mathbb{B}$ be the
category having the pairs $(m,n)\in\mathbb{N}^2$ as objects
together with morphism sets such that:
\[
Mor_{\mathbb{B}}((m,n),(p,q))=\begin{cases}
\Sigma_{m}^{op}\times\Sigma_{n}, & \text{if $(p,q)=(m,n)$},\\
\emptyset & \text{otherwise}.
\end{cases}
\]
The $\Sigma$-biobjects in $\mathcal{C}$ are the $\mathbb{B}$-diagrams
in $\mathcal{C}$. So a $\Sigma$-biobject is a double sequence $\{M(m,n)\in\mathcal{C}\}_{(m,n)\in\mathbb{N}^2}$
where each $M(m,n)$ is equipped with a right action of $\Sigma_{m}$
and a left action of $\Sigma_{n}$ commuting with each other. Let $\mathbb{A}$
be the discrete category of pairs $(m,n)\in\mathbb{N}^2$. We have
an obvious forgetful functor $\phi^{*}:\mathcal{C}^{\mathbb{B}}\rightarrow\mathcal{C}^{\mathbb{A}}$.
This functor has a left adjoint $\phi_{!}:\mathcal{C}^{\mathbb{A}}\rightarrow\mathcal{C}^{\mathbb{B}}$
defined on objects by

\begin{align*}
\forall M\in\mathcal{C}^{\mathbb{A}},\forall(m,n)\in\mathbb{N}^2,\phi_{!}M(m,n) & =1_{\mathcal{C}}[\Sigma_{n}\times\Sigma_{m}^{op}]\otimes M(m,n)\\
 & \cong\bigoplus_{\Sigma_{n}\times\Sigma_{m}^{op}}M(m,n).
\end{align*}

\medskip{}

\begin{defn}
(1) Let $\mathcal{C}$ be a symmetric monoidal category. A \emph{prop} in
$\mathcal{C}$ is a symmetric monoidal category $P$, enriched over
$\mathcal{C}$, with $\mathbb{N}$ as object set and the tensor product
given by $m\otimes n=m+n$ on objects. Let us unwrap this definition.
Firstly we see that a prop is a $\Sigma$-biobject. Indeed, the group
$\Sigma_{m}$ acts on $m=1+...+1=1^{\otimes m}$ and the group $\Sigma_{n}^{op}$
acts on $n=1+...+1=1^{\otimes n}$ by permuting the variables at the
morphisms level. A prop is endowed with horizontal products
\[
\circ_{h}:P(m_1,n_1)\otimes P(m_2,n_2)\rightarrow P(m_1+m_2,n_1+n_2)
\]
which are defined by the tensor product of homomorphisms, since
$P(m_1\otimes m_2,n_1\otimes n_2)=P(m_1+m_2,n_1+n_2)$
by definition of the tensor product on objects.
It also admits vertical composition products
\[
\circ_{v}:P(k,n)\otimes P(m,k)\rightarrow P(m,n)
\]
corresponding to the composition of homomorphisms, and units $1\rightarrow P(n,n)$
corresponding to identity morphisms of the objects $n\in\mathbb{N}$ in $P$. These
operations satisfy relations coming from the axioms of symmetric monoidal
categories. We refer the reader to Enriquez and Etingof \cite{EE} for an explicit description
of props in the context of modules over a ring. We denote by $\mathcal{P}$ the category of props.

Another construction of props is given in \cite{JY}:
props are defined there as $\boxtimes_{h}$-monoids in the $\boxtimes_{v}$-monoids
of colored $\Sigma$-biobjects, where $\boxtimes_{h}$ and $\boxtimes_{v}$
denote respectively a horizontal composition product and a vertical
composition product.
\end{defn}
\medskip{}
Appendix A of \cite{Fre1} provides a construction of the free prop
on a $\Sigma$-biobject. The free prop functor is left adjoint to
the forgetful functor:
\[
F:\mathcal{C}^{\mathbb{B}}\rightleftarrows\mathcal{P}:U.
\]

\medskip{}

\begin{defn}
(1) To any object $X$ of $\mathcal{C}$ we can associate an \emph{endomorphism
prop} $End_{X}$ defined by
\[
End_{X}(m,n)=Hom_{\mathcal{C}}(X^{\otimes m},X^{\otimes n}).
\]
The actions of the symmetric groups are the permutations of the input
variables and of the output variables, the horizontal product is the
tensor product of homomorphisms and the vertical composition product
is the composition of homomorphisms. The units $1_{\mathcal{C}}\rightarrow Hom_{\mathcal{C}}(X^{\otimes n},X^{\otimes n})$
represent $id_{X^{\otimes n}}$.

(2) An \emph{algebra over a prop $P$}, or $P$-algebra, is an object $X\in\mathcal{C}$
equipped with a prop morphism $P\rightarrow End_{X}$.
\end{defn}
\medskip{}
We can also define a $P$-algebra in a symmetric monoidal category
over $\mathcal{C}$:

\medskip{}

\begin{defn}
Let $\mathcal{E}$ be a symmetric monoidal category over $\mathcal{C}$.

(1) The endomorphism prop of $X\in\mathcal{E}$ is given by $End_{X}(m,n)=Hom_{\mathcal{E}}(X^{\otimes m},X^{\otimes n})$
where $Hom_{\mathcal{E}}(-,-)$ is the external hom bifunctor of $\mathcal{E}$.

(2) Let $P$ be a prop in $\mathcal{C}$. A $P$-algebra in $\mathcal{E}$
is an object $X\in\mathcal{E}$ equipped with a prop morphism $P\rightarrow End_{X}$.
\end{defn}

\medskip{}

\textbf{Example:} we recall from \cite{Fre1} an explicit definition in the case of a diagram category over $\mathcal{E}$:
let $\{X_i\}_{i\in I}$ be a $I$-diagram in $\mathcal{E}$, then
\[
End_{\{X_i\}_{i\in I}}=\int_{i\in I}Hom_{\mathcal{E}}(X_i^{\otimes m},X_i^{\otimes n}).
\]
This end can equivalently be defined as a coreflexive equalizer
\[
\xymatrix{End_{\{X_i\}}(m,n)\ar[r] & \prod_{i\in I}Hom_{\mathcal{E}}(X_i^{\otimes m},X_i^{\otimes n})\ar@<1ex>[r]^-{d_0}\ar@<-1ex>[r]_-{d_1} & \prod_{u:i\rightarrow j\in mor(I)}Hom_{\mathcal{E}}(X_i^{\otimes m},X_j^{\otimes n})\ar@/^{2pc}/[l]^{s_0}}
\]
where $d_0$ is the product of the maps
\[
u_{*}:Hom_{\mathcal{E}}(X_i^{\otimes m},X_i^{\otimes n})\rightarrow Hom_{\mathcal{E}}(X_i^{\otimes m},X_j^{\otimes n})
\]
 induced by the morphisms $u:i\rightarrow j$ of $I$ and $d_1$
is the product of the maps
\[
u^{*}:Hom_{\mathcal{E}}(X_j^{\otimes m},X_j^{\otimes n})\rightarrow Hom_{\mathcal{E}}(X_i^{\otimes m},X_j^{\otimes n})
\]
The section $s_0$ is the projection on the factors associated to
the identities $id:i\rightarrow i$.

This construction is functorial in $I$: given a $J$-diagram $\{X_j\}_{j\in J}$,
every functor $\alpha:I\rightarrow J$ gives rise to a prop morphism
$\alpha^{*}:End_{\{X_j\}_{j\in J}}\rightarrow End_{\{X_{\alpha(i)}\}_{i\in I}}$.

\bigskip{}

\subsection{The semi-model category of props}

Suppose that $\mathcal{C}$ is a cofibrantly generated symmetric monoidal
model category. The category of $\Sigma$-biobjects $\mathcal{C}^{\mathbb{B}}$
is a diagram category over $\mathcal{C}$, so it inherits a cofibrantly
generated model category structure. The weak equivalences and fibrations are
defined componentwise. The generating (acyclic) cofibrations are given
by $i\otimes \phi_{!}G_{(m,n)}$ , where $(m,n)\in\mathbb{N}^2$
and $i$ ranges over the generating (acyclic) cofibrations of $\mathcal{C}$.
Here $\otimes$ is the external tensor product of $\mathcal{C}^{\mathbb{B}}$
and $G_{(m,n)}$ is the double sequence defined by
\[
G_{(m,n)}(p,q)=\begin{cases}
1_{\mathcal{C}}, & \text{if $(p,q)=(m,n)$},\\
0 & \text{otherwise}.
\end{cases}
\]

We can also see this result as a transfer of cofibrantly generated
model category structure via the adjunction $\phi_{!}:\mathcal{C}^{\mathbb{A}}\rightleftarrows\mathcal{C}^{\mathbb{B}}:\phi^{*}$
(via exactly the same proof as in the case of $\Sigma$-objects, see for instance Proposition 11.4.A in \cite{Fre2}).
The question is to know whether the adjunction $F:\mathcal{C}^{\mathbb{B}}\rightleftarrows\mathcal{P}:U$
transfer this model category structure to the props. In the general
case it works only with the subcategory $\mathcal{P}_0$ of props
with non-empty inputs or outputs and does not give a full model category
structure. We give the precise statement in Theorem 1.10.

\medskip{}

\begin{defn}
A $\Sigma$-biobject $M$ has \emph{non-empty inputs} if it satisfies
\[
M(0,n)=\begin{cases}
1_{\mathcal{C}}, & \text{if $n=0$},\\
0 & \text{otherwise}.
\end{cases}
\]
We define in a symmetric way a $\Sigma$-biobject with \emph{non-empty outputs}.
The category of $\Sigma$-biobjects with non-empty inputs is noted
$\mathcal{C}_0^{\mathbb{B}}$.
\end{defn}
\medskip{}

The composite adjunction $ $
\[
\mathcal{C}^{\mathbb{A}}\rightleftarrows\mathcal{C}^{\mathbb{B}}\rightleftarrows\mathcal{P}
\]
restricts to an adjunction
\[
\mathcal{C}_0^{\mathbb{A}}\rightleftarrows\mathcal{C}_0^{\mathbb{B}}\rightleftarrows\mathcal{P}_0.
\]
 We define the weak equivalences (respectively fibrations) in $\mathcal{P}_0$
componentwise, i.e their images by the forgetful functor $U:\mathcal{P}_0\rightarrow\mathcal{C}_0^{\mathbb{A}}$
are weak equivalences (respectively fibrations) in $\mathcal{C}_0^{\mathbb{A}}$.
We define the generating (acyclic) cofibrations as the images under the
free prop functor of the generating (acyclic) cofibrations of $\mathcal{C}_0^{\mathbb{B}}$.
We have the following result:

\medskip{}

\begin{thm}
(cf. \cite{Fre1}, Theorem 4.9) Let $\mathcal{C}$ be a cofibrantly
generated symmetric monoidal model category. Suppose moreover that the unit of $\mathcal{C}$ is cofibrant.
Then the category $\mathcal{P}_0$
of props with non-empty inputs (or outputs) equipped with the
classes of weak equivalences, fibrations and cofibrations of 1.3 forms a semi-model category.
Moreover the forgetful functor $U:\mathcal{P}_0\rightarrow\mathcal{C}_0^{\mathbb{A}}$
preserves cofibrations with cofibrant domain.
\end{thm}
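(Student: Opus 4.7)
The plan is to apply the standard transfer-of-structure principle to the free/forgetful adjunction $F:\mathcal{C}_0^{\mathbb{B}}\rightleftarrows\mathcal{P}_0:U$. One defines the generating (acyclic) cofibrations of $\mathcal{P}_0$ as $F(I)$ and $F(J)$, where $I,J$ are the generating (acyclic) cofibrations of $\mathcal{C}_0^{\mathbb{B}}$, and declares the fibrations and weak equivalences to be those maps whose image under $U$ is a fibration, respectively weak equivalence, in $\mathcal{C}_0^{\mathbb{A}}$. Since the forgetful functor commutes with filtered colimits of props, the smallness conditions needed to run the small object argument on $F(I)$ and $F(J)$ transfer immediately from $\mathcal{C}_0^{\mathbb{B}}$, and the factorization axioms follow formally once the acyclicity condition below is established.

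The core technical difficulty is to show that, for a cofibrant prop $P$ and a pushout square
\[
\xymatrix{F(A)\ar[r]\ar[d] & F(B)\ar[d]\\ P\ar[r] & P\sqcup_{F(A)}F(B)}
\]
in which $A\rightarrowtail B$ is an acyclic cofibration of $\Sigma$-biobjects with non-empty inputs, the resulting map $P\to P\sqcup_{F(A)}F(B)$ is a componentwise weak equivalence in $\mathcal{C}_0^{\mathbb{A}}$. The strategy is to exploit the explicit graph-theoretic description of the free prop from Appendix A of \cite{Fre1} to build a filtration $P=P_0\to P_1\to P_2\to\cdots$ whose colimit is the pushout, indexed by the number of vertices of the underlying graph labelled by new generators extracted from a cone decomposition of $A\rightarrowtail B$. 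Each layer $P_k\to P_{k+1}$ is itself realized as a pushout along an iterated pushout-product map built out of $A\rightarrowtail B$ and structure morphisms of $P$; by the axiom MM1 for the external tensor product of $\mathcal{C}^{\mathbb{B}}$, cofibrancy of $1_{\mathcal{C}}$, and cofibrancy of $P$, each such pushout-product is an acyclic cofibration in $\mathcal{C}_0^{\mathbb{A}}$. The transfinite composition then yields the desired weak equivalence.

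The non-empty inputs (or outputs) hypothesis is what makes the combinatorial analysis tractable. Without it, graphs describing free prop elements may be decorated by arbitrarily many $0$-ary vertices grafted onto any fixed source or target, so the filtration by ``number of new-generator vertices'' fails to be exhaustive in a finitary way, and the pushout-product argument breaks down. The restriction to $\mathcal{P}_0$ is precisely what ensures that each filtration stage is built from a finite coproduct of tensor products of components of $P$ and $B/A$, keeping everything cofibrant and manageable. The cofibrancy hypothesis on $P$ is used exactly at the step where the auxiliary pushout-products appear, and this is also where the lifting axioms are only guaranteed to hold for maps out of cofibrant props: hence a semi-model structure rather than a full one.

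Finally, the assertion that $U$ preserves cofibrations with cofibrant domain is obtained by running the same filtration with an arbitrary cofibration $A\rightarrowtail B$ in place of an acyclic one: MM1 then guarantees that each layer $P_k\to P_{k+1}$ is a cofibration in $\mathcal{C}_0^{\mathbb{A}}$, and closure of cofibrations under transfinite composition finishes the argument. I expect the main obstacle to be the bookkeeping in the graph-filtration step, both in identifying the layers as explicit pushouts of pushout-product maps and in verifying that the non-empty inputs assumption makes the filtration exhaustive after a well-ordered sequence of stages.
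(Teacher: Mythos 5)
The paper does not actually prove this statement: it is recalled from \cite{Fre1} (Theorem 4.9), so there is no internal proof to compare your attempt against. Measured against the proof in that reference, your outline identifies the correct strategy and all of its landmarks: transfer along the adjunction $F:\mathcal{C}_0^{\mathbb{B}}\rightleftarrows\mathcal{P}_0:U$, generating (acyclic) cofibrations $F(I)$ and $F(J)$, smallness inherited because $U$ preserves filtered colimits, reduction of everything to the analysis of a pushout of $F(A)\rightarrow F(B)$ along a map $F(A)\rightarrow P$ with $P$ cofibrant, and the diagnosis that the structure is only a semi-model structure because this analysis requires cofibrancy of the domain. This is indeed how Fresse proceeds.

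However, the proposal defers precisely the step that constitutes the proof, and a few points are glossed over in a way that hides real difficulties. First, the filtration of $P\sqcup_{F(A)}F(B)$ by the number of vertices labelled by new generators, and the identification of each layer $P_{k}\rightarrow P_{k+1}$ as a pushout along an explicit pushout-product map, are asserted rather than constructed; in \cite{Fre1} this graph-theoretic analysis of coproducts $P\vee F(M)$ occupies most of the technical body of the paper and is where the non-empty-inputs hypothesis is genuinely used (your explanation of that hypothesis is heuristic and does not by itself show the filtration is exhaustive). Second, to apply MM1 to the pushout-product maps you need the \emph{components} of $P$ to be cofibrant in $\mathcal{C}$; but ``$U$ preserves cofibrations with cofibrant domain'' is the final clause of the theorem, so invoking componentwise cofibrancy of a cofibrant prop at this stage is circular. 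The resolution is to prove the pushout analysis and the preservation statement simultaneously, by induction on a cell decomposition of $P$, and your sketch does not acknowledge or organize this induction. So the proposal is a correct roadmap to the argument of \cite{Fre1}, but it cannot be accepted as a proof: the entire mathematical content of the theorem lives in the steps you have labelled as bookkeeping.
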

\medskip{}
A semi-model category structure is a slightly weakened version of model category structure: the
lifting axioms hold only for cofibrations with cofibrant domain, and the factorization axioms hold only
on maps with cofibrant domain (see the relevant section of \cite{Fre1}).
The notion of a semi-model category is sufficient to do homotopy theory. In certain categories
we recover a full model structure on the whole category of props:

\medskip{}

\begin{thm}
(cf. \cite{Fre1}, Theorem 5.5) If the base category $\mathcal{C}$
is the category of dg-modules over a ring $\mathbb{K}$ such that $\mathbb{Q}\subset \mathbb{K}$,
simplicial modules over a ring, simplicial sets or topological spaces,
then there is a transfer of model category structure on the whole category of props via the adjunction
$\mathcal{C}^{\mathbb{A}}\rightleftarrows\mathcal{P}$.
\end{thm}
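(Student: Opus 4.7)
The plan is to verify the conditions of the standard transfer theorem for cofibrantly generated model structures applied to the composite adjunction
\[
F\circ\phi_{!}:\mathcal{C}^{\mathbb{A}}\rightleftarrows\mathcal{P}:\phi^{*}\circ U.
\]
Writing $I$ and $J$ for the generating cofibrations and generating acyclic cofibrations of $\mathcal{C}^{\mathbb{A}}$, and $FI$, $FJ$ for their images in $\mathcal{P}$, two conditions need to be checked: the smallness of the domains of $FI$ and $FJ$ relative to the associated cell complexes, and the acyclicity of every relative $FJ$-cell complex. Once these are established, Kan's lifting theorem produces a cofibrantly generated model structure on all of $\mathcal{P}$, with fibrations and weak equivalences detected after applying $U$, which is exactly the content of Theorem 1.11.

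The first ingredient is that $\mathcal{P}$ is complete and cocomplete: limits are computed componentwise via $U$, while colimits are built by combining the explicit free prop construction of Appendix A of \cite{Fre1} with coproducts and reflexive coequalizers. Since $U$ preserves filtered colimits, the smallness half of the transfer axioms reduces to smallness in $\mathcal{C}^{\mathbb{A}}$, which is already part of the hypotheses in each of the four base categories listed. So this portion is handled uniformly with no further input.

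The core difficulty is acyclicity. This is precisely the step at which the restriction to non-empty inputs in Theorem 1.10 is to be removed, so one no longer has the cofibrancy of the source prop at one's disposal. The strategy is to analyse a pushout
\[
P \longrightarrow P\sqcup_{F\phi_{!}A}F\phi_{!}B
\]
along an acyclic cofibration $A \rightarrowtail B$ of $\Sigma$-biobjects and an arbitrary prop map $F\phi_{!}A\to P$, via a filtration by the number of $B/A$-cells attached, indexed by the directed graphs that parametrise the free prop. The successive subquotients of this filtration then take the form of tensor products of components of $P$ with symmetric coinvariants of tensor powers of the cofibre $B/A$, the $\Sigma_{n}$-action coming from automorphisms of the indexing graph. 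The remaining task is to show that these coinvariants preserve acyclicity, and this is exactly where each of the four hypothesised base categories enters: for dg-modules over a ring containing $\mathbb{Q}$, Maschke's theorem makes $\mathbb{K}[\Sigma_{n}]$ semisimple and coinvariants exact; for simplicial modules, simplicial sets and topological spaces, the $\Sigma_{n}$-actions on the relevant summands of tensor powers of a cofibration are free, so the coinvariants model homotopy coinvariants and preserve weak equivalences.

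The main obstacle I expect is the combinatorial bookkeeping of this filtration. In the operadic case the analogous analysis is indexed by rooted trees and is by now classical, but for props the indexing objects are directed graphs with both horizontal and vertical grafting, and one must carefully separate the vertices inherited from $P$ from the newly attached $B/A$-cells in order to exhibit each layer as a free extension of the previous one. Once this graph-level filtration is set up correctly, acyclicity of each layer in the four contexts is a routine consequence of the properties of the base category, and the acyclicity condition on $FJ$-cells follows, completing the verification of the transfer.
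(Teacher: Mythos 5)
The paper itself gives no proof of this statement: Theorem 1.11 is quoted verbatim from \cite{Fre1} (Theorem 5.5). The argument used there, and the standard one for results of this type, is the Quillen--Berger--Moerdijk \emph{path-object argument}: each of the four base categories admits a symmetric monoidal fibrant replacement functor and a (co)commutative interval (the polynomial forms $\mathbb{K}[t,dt]$ when $\mathbb{Q}\subset\mathbb{K}$; the interval $\Delta[1]$, respectively $[0,1]$, in the simplicial and topological cases), which produces for \emph{every} prop $Q$ a functorial factorization $Q\stackrel{\sim}{\rightarrow}Q^I\twoheadrightarrow Q\times Q$ through an honest prop $Q^I$; the acyclicity condition of the transfer then follows from the usual retract-and-homotopy argument, with no cell filtration at all. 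Note that these are precisely the hypotheses recalled in Theorem 3.4 of the present paper for the colored case. Your completeness/cocompleteness and smallness steps are fine and agree with the standard reduction.

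The genuine gap is in your acyclicity step. The graph filtration of a pushout along $F\phi_{!}A\rightarrow F\phi_{!}B$, with layers expressed through $\Sigma_n$-coinvariants of tensor powers of the cofibre, is the mechanism behind the \emph{semi-model} structure of Theorem 1.10 (where one may assume cofibrant, in particular $\Sigma$-cofibrant, sources); as a proof of the full transfer it only works in the first of your four cases. Concretely, the $\Sigma_n$-action permuting the factors of $(B/A)^{\otimes n}$ is \emph{not} free --- diagonal tensors are fixed --- so your freeness claim fails for simplicial sets, topological spaces and simplicial modules; for simplicial modules over an arbitrary ring $\mathbb{K}[\Sigma_n]$ need not be semisimple either, so neither of your two mechanisms applies there; and in the non-additive categories of simplicial sets and topological spaces the layers are not of the advertised additive form in the first place ($B/A$ and ``exactness of coinvariants'' have no meaning, and quotients by non-free actions do not preserve weak equivalences). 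What saves these three cases is exactly the simplicial/topological path object, i.e.\ the interval argument above. Only the dg case with $\mathbb{Q}\subset\mathbb{K}$ is correctly handled by Maschke's theorem in your scheme, and even there the path object $Q\otimes\mathbb{K}[t,dt]$ gives a shorter proof.
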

\bigskip{}

\section{Homotopy invariance of the classifying space}

\bigskip{}

The purpose of this section is to establish Theorem 0.1.
We give the details of our arguments
in the case $\mathcal{E}=\mathcal{C}=Ch_{\mathbb{K}}$ (the $\mathbb{Z}$-graded
chain complexes over a field $\mathbb{K}$ of characteristic zero). Afterwards, we briefly explain
the generalization of these arguments when $\mathcal{E}$ is a cofibrantly
generated symmetric monoidal model category over $Ch_{\mathbb{K}}$.

\bigskip{}

\subsection{Statement of the result and outline of the proof}

In the work of Dwyer-Kan \cite{DK}, the classifying space of a category $\mathcal{M}$
equipped with a subcategory of weak equivalences $w\mathcal{M}$
is the simplicial set $\mathcal{N}(w\mathcal{M})$, where $\mathcal{N}$
is the simplicial nerve functor.
This simplicial set satisfies the following crucial property:
\begin{thm}(Dwyer-Kan)
Let $M$ be a category, $W$ a class of morphisms of $M$ and $wM$ the subcategory of $M$ defined by
$ob(wM)=ob(M)$ and $mor(wM)=W$. Then one has a homotopy equivalence
\[
\mathcal{N}wM\sim \coprod_{[X]} \overline{W}LwM(X,X)
\]
where $\mathcal{N}$ is the simplicial nerve functor, $[X]$ ranges over the weak equivalence
classes of the objects of $M$, $\overline{W}$ is the simplicial classifying space (see \cite{May}), and $L(-)$ is the simplicial localization functor.
When $M$ is a model category, one has moreover
\[
\mathcal{N}wM\sim \coprod_{[X]} \overline{W}haut(X)
\]
where $haut(X)$ is the simplicial monoid of self weak equivalences on a fibrant-cofibrant resolution of $X$.
\end{thm}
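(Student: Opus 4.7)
The plan is to decompose $\mathcal{N}wM$ into its path components (which correspond to weak equivalence classes of objects) and then identify each component as a classifying space of self weak equivalences via the Dwyer--Kan simplicial localization.

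First I would observe that two objects of $M$ are connected by a zigzag of arrows in $wM$ if and only if they lie in the same weak equivalence class. Writing $(wM)_{[X]}$ for the full subcategory of $wM$ on objects weakly equivalent to $X$, this gives a decomposition
\[
\mathcal{N}wM \;=\; \coprod_{[X]} \mathcal{N}(wM)_{[X]},
\]
so it suffices to construct, for each class $[X]$, a homotopy equivalence $\mathcal{N}(wM)_{[X]} \simeq \overline{W}LwM(X,X)$.

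Next I would pass to the simplicial (hammock) localization. Because every arrow of $wM$ already lies in $W$, the mapping spaces of $L(wM)_{[X]}$ are, up to homotopy, simplicial groups, so $L(wM)_{[X]}$ behaves like a simplicial groupoid. A standard comparison of Dwyer and Kan relates the ordinary nerve of a category with weak equivalences to the classifying diagram of its simplicial localization, yielding a natural weak equivalence $\mathcal{N}(wM)_{[X]} \simeq \mathrm{diag}\,\mathcal{N}L(wM)_{[X]}$. Since $(wM)_{[X]}$ has a single weak equivalence class of objects, $L(wM)_{[X]}$ is homotopically connected, and a classical two-sided bar construction identifies its classifying diagram with $\overline{W}$ of the endomorphism simplicial monoid at any chosen basepoint. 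Because restricting to the connected subcategory $(wM)_{[X]}$ does not affect the simplicial monoid at $X$, one obtains
\[
\mathcal{N}(wM)_{[X]} \simeq \overline{W}L(wM)_{[X]}(X,X) \simeq \overline{W}LwM(X,X),
\]
which establishes the first displayed equivalence.

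For the second formula, assume now that $M$ carries a model structure. The hammock localization theorem of Dwyer and Kan provides a chain of weak equivalences of simplicial monoids between $LwM(X,X)$ and the simplicial monoid of self weak equivalences of a fibrant-cofibrant resolution $\tilde X$ of $X$, that is $haut(X)$. Since $\overline{W}$ preserves weak equivalences of simplicial monoids, applying it to this chain yields the second equivalence.

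The main technical obstacle is the pair of ingredients used in the second paragraph, namely the comparison $\mathcal{N}C \simeq \mathrm{diag}\,\mathcal{N}LC$ between the nerve of a category with weak equivalences and the classifying diagram of its simplicial localization, together with the identification of the classifying diagram of a homotopically connected simplicial groupoid with $\overline{W}$ of its basepoint endomorphism monoid. Both rest on the bar construction machinery developed throughout the Dwyer--Kan series; once these are granted, the proof is a formal concatenation as sketched above.
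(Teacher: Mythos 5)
The paper does not prove this statement: it is quoted verbatim as a theorem of Dwyer--Kan, with the proof deferred to the cited references. Your outline is a correct reconstruction of the standard Dwyer--Kan argument (decomposition of $\mathcal{N}wM$ into path components, the equivalence $\mathcal{N}wM\simeq\mathrm{diag}\,\mathcal{N}LwM$, the identification of a connected group-like simplicial category's diagonal nerve with $\overline{W}$ of a basepoint endomorphism monoid, and the function-complex comparison $LwM(X,X)\simeq haut(X)$ in the model category case), so it matches the source the paper relies on.
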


In the case of $\mathcal{E}^P$
(the $P$-algebras in $\mathcal{E}$ for a prop $P$ defined in $\mathcal{C}$)
we use the name ``classifying space'' to refer to the simplicial
set $\mathcal{N}w(\mathcal{E}^{cf})^P$, where $w(\mathcal{E}^{cf})^P$
is the subcategory of $P$-algebra morphisms whose underlying morphisms
in $\mathcal{E}$ are weak equivalences between fibrant-cofibrant
objects.

\subsubsection{The operadic case}

In the operadic context, algebras over operads satisfy the following fundamental property:
a weak equivalence between two cofibrant operads induces a weak equivalence between their
associated classifying spaces of algebras. The proof of this result is done in three steps.
Firstly, one shows the existence of an adjunction between the two categories of algebras:
if $\phi:P\rightarrow Q$ is a morphism of operads, it induces an adjunction
\[
\phi_!:\mathcal{E}^P\leftrightarrows \mathcal{E}^Q:\phi^*
\]
where $\phi^*$ is given on each $Q$-algebra $Q\rightarrow End_X$ by the precomposition $P\stackrel{\phi}{\rightarrow}Q\rightarrow End_X$
and $\phi_!$ is obtained via a certain coequalizer for which we refer the reader to \cite{Fre2}.
Secondly one proves that if $\phi$ is a weak equivalence and $P$ and $Q$ are "well-behaved" operads,
then this adjunction actually forms a Quillen equivalence,
presented in full generality in \cite{Fre2}. This Quillen equivalence is precisely
obtained between semi-model categories of algebras over weakly equivalent $\Sigma$-cofibrant operads, in a cofibrantly generated
symmetric monoidal model category $\mathcal{E}$ over a base category $\mathcal{C}$(see Theorem 12.5.A. of \cite{Fre2}, proved in Chapter 16).
Finally, according to the results of Dwyer-Kan, a Quillen equivalence induces a weak homotopy equivalence of
the classifying spaces (actually it induces much more, that is, a Dwyer-Kan equivalence of the simplicial
localizations).

\subsubsection{The key statement}

Such a method fails in the prop setting: one does not know how to construct a left adjoint of the functor $\phi^*$.
And even if such an adjoint exists, there is no free algebra functor, and
a model structure does not exist on the category of algebras over a prop except in some particular cases
such as simplicial sets (see \cite{JY}). So the difficult part is to deal with this absence of
model structure to get a similar result for algebras over props. Therefore, our method is
entirely different from this one. The crux of our proof is given by the following statement:

\medskip{}

\begin{thm}
Let $P$ be a cofibrant prop. The mappings $\mathcal{N}\varphi^{*},\mathcal{N}\psi^{*}:\mathcal{N}w(\mathcal{E}^{cf})^P\rightrightarrows\mathcal{N}w(\mathcal{E}^{cf})^{P}$
associated to homotopic prop morphisms $\varphi,\psi:P\rightrightarrows P$
are homotopic in $sSet$.
\end{thm}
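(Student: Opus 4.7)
The plan is to show that homotopic prop morphisms $\varphi, \psi$ give rise to naturally weakly equivalent pullback functors $\varphi^*, \psi^* : w(\mathcal{E}^{cf})^P \to w(\mathcal{E}^{cf})^P$, and then to invoke the standard principle that a natural transformation between functors induces a simplicial homotopy at the level of nerves. In this way the entire statement is reduced to producing a single piece of data: a natural zigzag of weak equivalences connecting $\varphi^*$ and $\psi^*$.

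First, I would fix a concrete presentation of the homotopy. Using the semi-model structure of Theorem 1.10 (or the full model structure of Theorem 1.13 when $\mathrm{char}\,\mathbb{K} = 0$), factor the diagonal $P \to P \times P$ as an acyclic cofibration $P \xrightarrow{\sim} \tilde{P}$ followed by a fibration $(p_0, p_1) : \tilde{P} \twoheadrightarrow P \times P$. Since $P$ is cofibrant, the assumed homotopy between $\varphi$ and $\psi$ is realized by a prop morphism $H : P \to \tilde{P}$ with $p_0 H = \varphi$ and $p_1 H = \psi$. This places the homotopy data on the prop level, where the model structure is available.

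Second, I would transfer $H$ to the level of $P$-algebras. For a $P$-algebra $A$ in $w(\mathcal{E}^{cf})^P$ with structure map $\rho : P \to End_A$, the ``prop of $P$-diagrams'' machinery developed in Sections 2.3--2.5 should produce, functorially in $A$, a $P$-algebra $\mathrm{Path}(A)$ equipped with two $P$-algebra morphisms $\mathrm{Path}(A) \to A$ that recover exactly $\varphi^*(A)$ and $\psi^*(A)$ as targets. Intuitively, $\mathrm{Path}(A)$ plays the role of $A \otimes \Omega^*(\Delta^1)$, its underlying object is weakly equivalent to $A$, and its $P$-algebra structure is obtained by combining $\rho$ with $H$ and a suitable endomorphism prop of a diagram shaped like an interval. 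This produces the desired natural zigzag of weak equivalences $\varphi^*(A) \xleftarrow{\sim} \mathrm{Path}(A) \xrightarrow{\sim} \psi^*(A)$. Applying the nerve to each leg, and using that a natural transformation $F \Rightarrow G : \mathcal{C} \to \mathcal{D}$ factors through a functor $\mathcal{C} \times [1] \to \mathcal{D}$ and hence induces a simplicial map $\mathcal{N}\mathcal{C} \times \Delta^1 \to \mathcal{N}\mathcal{D}$, the two homotopies compose to give $\mathcal{N}\varphi^* \sim \mathcal{N}\psi^*$.

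The hard part is the construction of $\mathrm{Path}(-)$ in the second step. Because $\mathcal{E}^P$ carries no model structure, one cannot produce path objects for $P$-algebras by any standard factorization argument inside the algebra category. The whole point of the ``prop of $P$-diagrams'' is to perform this construction one level up, in the category of props where Theorem 1.10 provides lifting and factorization, and then to pull the result back through endomorphism props of diagrams so as to obtain a structure that varies functorially in $A$ and that is compatible with the prop-level homotopy $H$. Once this functorial path object is available, the remainder of the argument is purely formal and rests on the familiar interaction between natural transformations and the simplicial nerve.
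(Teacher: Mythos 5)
Your overall strategy is the paper's: produce a functorial zigzag of weak equivalences $\varphi^*(A)\stackrel{\sim}{\leftarrow}\mathrm{Path}(A)\stackrel{\sim}{\rightarrow}\psi^*(A)$ by working at the prop level, then convert the two natural transformations into simplicial homotopies of nerves. The first and third steps are fine (the paper uses a cylinder object $P\vee P\rightarrowtail\widetilde{P}\twoheadrightarrow P$ rather than a path object on $P$, but for a cofibrant--fibrant prop these are interchangeable). The problem is that your second step, which you yourself identify as ``the hard part,'' is not actually carried out: you assert that the prop-of-$P$-diagrams machinery ``should produce'' a functorial $\mathrm{Path}(A)$ whose two endpoint maps land on $\varphi^*(A)$ and $\psi^*(A)$, but that machinery \emph{is} the proof, and the way the homotopy $H$ gets threaded into it is precisely the non-formal content.

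Concretely, two things are missing. First, the lifting you invoke requires knowing that the projection $\pi:End_{\mathcal{Y}(P)}\rightarrow P$ from the prop of $P$-diagrams of the full path-object diagram down to $P$ is an \emph{acyclic fibration} of props; this is Lemma 2.8 in the paper and rests on an explicit computation with the five-dimensional complex $Z$ and the pushout-product axiom MM1' in $Ch_{\mathbb{K}}$ --- it is not a formal consequence of Theorem 1.10. Second, and more importantly, even granting the functorial path object of Proposition 2.9, the lift $P\rightarrow End_{\mathcal{Y}(P)}$ of the identity only yields a zigzag $\mathrm{Id}\leftarrow Z\rightarrow\mathrm{Id}$ in which \emph{both} endpoints carry the same structure; nothing in your description forces one endpoint to be $\varphi^*$ and the other $\psi^*$. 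The paper achieves this by a second, separate lifting: it forms the map $(\varphi\circ s_0,h):\widetilde{P}\rightarrow P_0\times P_1$ from the cylinder homotopy $h$, checks compatibility with a chosen lift $k$ of $\varphi$ through $\pi$, and lifts against the fibration $u^*:End_{\mathcal{Z}(P)}\rightarrow P_0\times P_1$ along the acyclic cofibration $d_0:P\rightarrowtail\widetilde{P}$; restricting the resulting map along $d_1$ is what makes the two ends of the zigzag evaluate to $(\varphi,\psi)$. Your phrase ``combining $\rho$ with $H$ and a suitable endomorphism prop of a diagram shaped like an interval'' points in the right direction but supplies no mechanism, so as written the proposal reduces the theorem to an unproved claim that contains all of its difficulty.
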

\medskip{}

Let us outline the main steps of the proof of Theorem 2.2 in the case $\mathcal{E}=\mathcal{C}=Ch_{\mathbb{K}}$.
The idea is to construct a zigzag of natural transformations $\varphi^{*}\stackrel{\sim}{\leftarrow}Z\stackrel{\sim}{\rightarrow}\psi^{*}$,
where $Z$ is a functorial path object in $Ch_{\mathbb{K}}^P$. For this, we need in particular to obtain such a functorial
path object. The existence of a path object for algebras over props is proved in \cite{Fre1} (Section 8), but this path object is not functorial.
The main point of our proof is to solve this functoriality problem by ``correcting'' in a certain sense the $P$-algebra structure on the path object,
and then following arguments similar to those of \cite{Fre1} but with functorial $P$-actions. We proceed as follows.
We use functional notations $\mathcal{Y}(X)$, $\mathcal{Z}(X)$ and $\mathcal{V}(X)$ to refer to diagrams functorially associated to
an object $X$ which, in our constructions, ranges within (some subcategory of) $Ch_{\mathbb{K}}$.
We first consider the functorial path object diagram associated to any $X$ in $Ch_{\mathbb{K}}$

\medskip{}

\[
\mathcal{Y}(X): \xymatrix{ &  & X\\
X\ar@/^{1pc}/[urr]^{=}\ar@/^{-1pc}/[drr]_{=}\ar@{>->}[]!R+<4pt,0pt>;[r]_-{s}^-{\sim} & Z(X)\ar@{->>}[ur]_{d_0}^{\sim}\ar@{->>}[dr]^{d_1}_{\sim}\\
 &  & X
}.
\]
and its subdiagram $\mathcal{Z}(X)=\{X_0\stackrel{\sim}{\twoheadleftarrow}Z(X)\stackrel{\sim}{\twoheadrightarrow}X_1\}$.
We prove that the natural $P$-action existing on the diagram

\medskip{}

\[
\mathcal{V}(X): \xymatrix{ &  & X\\
X\ar[urr]^{=}\ar[drr]_{=}\\
 &  & X
}
\]
extends to a natural $P$-action on $\mathcal{Y}(X)$. For
this, we consider ``props of $P$-diagrams'', which
are built by replacing all the operations $X^{\otimes m}\rightarrow X^{\otimes n}$
in the endomorphism prop of a given diagram by operations
of $P(m,n)$. We use notations $End_{\mathcal{Y}(P)}$, $End_{\mathcal{Z}(P)}$ and $End_{\mathcal{V}(P)}$ to refer to
these props of $P$-diagrams. We verify that these constructions give rise to
props acting naturally on the endomorphism prop of the associated diagram.
We use these props of $P$-diagrams to give a $P$-action
on the zigzag of endofunctors $Id\stackrel{\sim}{\twoheadleftarrow}Z\stackrel{\sim}{\twoheadrightarrow}Id$.
We check that we retrieve the action given by $\varphi$ and $\psi$ on the extremity of this zigzag.
We thus have a zigzag connecting $\varphi^{*}$ and $\psi^{*}$
and yielding the desired homotopy between $\mathcal{N}\varphi^{*}$
and $\mathcal{N}\psi^{*}$.

\subsubsection{The argument line of the proof}

Let us first define the notion of a functorial $P$-action via a prop of $P$-diagrams:

\textbf{About functorial $P$-actions on diagrams.} We consider a diagram $\mathcal{D}(X)$ depending on $X$, which will be one of the three aforementioned diagrams
($\mathcal{V}(X)$, $\mathcal{Z}(X)$ and $\mathcal{Y}(X)$) and correspond to a certain functor denoted by $\mathcal{D}(X):I\rightarrow Ch_{\mathbb{K}}$.
We  will associate to $End_{\mathcal{D}(X)}$ a prop of $P$-diagrams
$End_{\mathcal{D}(P)}$.
When there exists a $P$-algebra structure on $X$, that is, a prop morphism $P\rightarrow End_X$,
this prop is equipped by construction with a morphism of props $ev_X:End_{\mathcal{D}(P)}\rightarrow End_{\mathcal{D}(X)}$.
This evaluation morphism sends every $\phi\in End_{\mathcal{D}(P)}(m,n)$ on a collection of morphisms $\phi_X(i):\mathcal{D}(X)(i)^{\otimes m}\rightarrow \mathcal{D}(X)(i)^{\otimes n}$ satisfying the following commutative diagram for every morphism $u:i\rightarrow j$ of $I$:
\[
\xymatrix{
\mathcal{D}(X)(i)^{\otimes m} \ar[d]_{\mathcal{D}(X)(u)^{\otimes m}} \ar[r]^{\phi_X(i)} & \mathcal{D}(X)(i)^{\otimes n} \ar[d]^{\mathcal{D}(X)(u)^{\otimes n}} \\
\mathcal{D}(X)(j)^{\otimes m} \ar[r]_{\phi_X(j)} & \mathcal{D}(X)(j)^{\otimes n}
}
\]
where $\mathcal{D}(X)(u)$ is the morphism in the diagram $\mathcal{D}(X)$ induced by $u$.
By construction of $End_{\mathcal{D}(P)}$, the prop morphism $ev_X$ satisfies a functoriality property with respect to $X$.
If $f:X\rightarrow Y$ is a morphism of $P$-algebras,
then the following diagrams are commutative for every $\phi\in End_{\mathcal{D}(P)}(m,n)$ and every $i\in I$:
\[
\xymatrix{
\mathcal{D}(X)(i)^{\otimes m} \ar[d]_{(\mathcal{D}(f)(i))^{\otimes m}} \ar[r]^{\phi_X(i)} & \mathcal{D}(X)(i)^{\otimes n} \ar[d]^{(\mathcal{D}(f)(i))^{\otimes n}} \\
\mathcal{D}(Y)(j)^{\otimes m} \ar[r]_{\phi_Y(j)} & \mathcal{D}(Y)(j)^{\otimes n}
}
\]
The commutativity of these diagrams implies that every morphism of props $P\rightarrow End_{\mathcal{D}(P)}$
induces a functorial $P$-action $P\rightarrow End_{\mathcal{D}(X)}$, that is, a functor
\[
\mathcal{D}:Ch_{\mathbb{K}}^P\rightarrow Func(I,Ch_{\mathbb{K}}^P)=Func(I,Ch_{\mathbb{K}})^P
\]
fitting in a commutative square
\[
\xymatrix{
Ch_{\mathbb{K}}\ar[r]^-{\mathcal{D}} & Func(I,Ch_{\mathbb{K}}) \\
Ch_{\mathbb{K}}^P \ar[r]_-{\mathcal{D}} \ar[u] & Func(I,Ch_{\mathbb{K}})^P \ar[u]
}
\]
in which the vertical arrows are the forgetful functors.

Our argument line is divided into two steps.

\textbf{Step 1.}
For every $X\in Ch_{\mathbb{K}}^P$, we have $End_{\mathcal{V}(X)}\cong End_X$
so the morphism $P\rightarrow End_X$ trivially induces a functorial $P$-action $P\rightarrow End_{\mathcal{V}(X)}$.
In our first step we build a diagram

\[
\xymatrix{ & End_{\mathcal{Y}(P)}\ar[r]^{ev_X}\ar@{->>}[d]_{{\pi}}^{\sim} & End_{\mathcal{Y}(X)}\ar[d]\\
P\ar@{-->}[ur]\ar[r]^{=} & P\ar[r]^{ev_X} & End_{\mathcal{V}(X)}
}
\]
In $Ch_{\mathbb{K}}$, the endomorphism prop $End_{\mathcal{Y}(X)}$
is built via the two following pullbacks:

\medskip{}

\[
\xymatrix{End_{\mathcal{Y}(X)}\ar[r]\ar[d] & End_{\mathcal{Z}(X)}\ar[d]^{s^*\circ pr}\\
End_{X}\ar[r]_-{s_*} & Hom_{X,Z(X)}
}
\]
and

\medskip{}

\[
\xymatrix{End_{\mathcal{Z}(X)}\ar[r]\ar[d] & End_{X_0}\times End_{X_1}\ar[d]^{d_{0}^{*}\times d_1^{*}}\\
End_{Z(X)}\ar[r]_-{(d_0,d_1)_{*}} & Hom_{Z(X),X_0}\times Hom_{Z(X),X_1}
}
\]
where $s_*$ and $(d_0,d_1)_{*}$ are maps induced by the composition
by $s$ and $(d_0,d_1)$, and $s^*$, $d_0^*$, $d_1^*$
are maps induced by the precomposition by $s$, $d_0$ and $d_1$
. The projection $pr:End_{\mathcal{Z}(X)}\rightarrow End_{Z(X)}$
is induced by the inclusion of diagrams $\{Z(X)\}\hookrightarrow\{X_0\stackrel{\sim}{\twoheadleftarrow}Z(X)\stackrel{\sim}{\twoheadrightarrow}X_1\}$
(see \cite{Fre1}, Section 8). The idea is to define a prop of $P$-diagrams $End_{\mathcal{Y}(P)}$ with a form
similar to that of $End_{\mathcal{Y}(X)}$, in order to get the
prop morphism $ev_X:End_{\mathcal{Y}(P)}\rightarrow End_{\mathcal{Y}(X)}$
induced by the morphism $P\rightarrow End_X$ for each $X\in Ch_{\mathbb{K}}^P$.
For this aim we use two pullbacks
similar to those above with props of $P$-diagrams and
$\Sigma$-biobjects replacing the usual ones.

\textbf{Step 2.}
In our second step, we show that ${\pi}$ is an acyclic fibration in
$\mathcal{P}$ in order to obtain the desired lifting $P\rightarrow End_{\mathcal{Y}(P)}$
inducing natural $P$-actions
\[
P\rightarrow End_{\mathcal{Y}(P)} \rightarrow End_{\mathcal{Y}(X)}
\]
for every $X\in Ch_{\mathbb{K}}^P$, which respect the $P$-algebra structures on the diagrams $\mathcal{V}(X)$.
It endows the category of $P$-algebras with a functorial path object.
Finally, we prove Theorem 2.2 in Section 2.6, by using lifting properties in the category of props
and providing the desired zigzag of natural transformations $\varphi^*\stackrel{\sim}{\leftarrow}Z\stackrel{\sim}{\rightarrow}\psi^*$.
Then we show how to deduce Theorem 0.1.

\medskip{}

\begin{rem}
We can also wonder about the homotopy invariance of the classifying space up to Quillen equivalences.
Let $P$ be a prop in $\mathcal{E}_1$. Let $F:\mathcal{E}_1\rightleftarrows\mathcal{E}_2:G$
be a symmetric monoidal adjunction. The prop $F(P)$ is defined by
applying the functor $F$ entrywise to $P$: the fact that $F$ is
symmetric monoidal ensures the preservation of the composition products
of $P$, giving to $F(P)$ a prop structure. Lemma 7.1 of \cite{JY} says
that the adjoint pair $(F,G)$ induces an adjunction $\overline{F}:\mathcal{E}_1^P\rightleftarrows\mathcal{E}_2^{F(P)}:\overline{G}$.
Now suppose that $(F,G)$ forms a Quillen adjunction. By Brown's lemma,
the functor $F$ preserves weak equivalences between cofibrant objects
and the functor $G$ preserves weak equivalences between fibrant objects.
If all the objects of $\mathcal{E}_1$ and $\mathcal{E}_2$ are
fibrant and cofibrant, then the adjoint pair $(\overline{F},\overline{G})$
restricts to an adjunction $\overline{F}: w(\mathcal{E}_1)^P\rightleftarrows w(\mathcal{E}_2)^{F(P)}:\overline{G}$
and thus gives rise to a homotopy equivalence $\mathcal{N} w(\mathcal{E}_1)^{P}\sim\mathcal{N} w(\mathcal{E}_2)^{F(P)}$.
\end{rem}

\medskip{}

\subsection{The path object $Z(X)=Z\otimes X$}

Recall that in the model category structure of $Ch_{\mathbb{K}}$, the fibrations
are the degreewise surjections, the cofibrations are the degreewise injections,
and the weak equivalences are the morphisms inducing isomorphisms in homology.
The category $Ch_{\mathbb{K}}$ has moreover the simplifying feature that finite
products and coproducts coincide. Let $Z$ be the chain complex
defined by
\[
Z=\mathbb{K}\rho_0\oplus \mathbb{K}\rho_1\oplus \mathbb{K}\sigma_0\oplus \mathbb{K}\sigma_1\oplus \mathbb{K}\tau.
\]
The elements $\tau$, $\rho_0$ and $\rho_1$ are three generators of degree
$0$ and $\sigma_0$, $\sigma_1$ two generators of degree $-1$.
The differential $d_Z$ is defined by $d_Z(\sigma_0)=d_Z(\sigma_1)=0$,
$d_Z(\tau)=0$, $d_Z(\rho_0)=\sigma_0$ and $d_Z(\rho_1)=\sigma_1$.

\medskip{}

\begin{lem}
The chain complex $Z\otimes X$ defines a path object on $X$ in $Ch_{\mathbb{K}}$,
fitting in a factorization $X\stackrel{\sim}{\rightarrowtail_s}Z\otimes X\twoheadrightarrow_{(d_0,d_1)}X\oplus X$
of the diagonal $\Delta=(id_X,id_X):X\rightarrow X\oplus X$ such
that $s$ is an acyclic cofibration and $(d_0,d_1)$ a fibration.
\end{lem}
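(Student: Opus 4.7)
The plan is to prescribe $s$, $d_{0}$, and $d_{1}$ on the generators of $Z$ and then verify the required properties by direct inspection. I would set $s(x) = \tau\otimes x$, and define $d_{0}, d_{1}\colon Z\otimes X\to X$ as the $\mathbb{K}$-linear extensions of
\[
d_{0}(\tau\otimes x) = d_{0}(\rho_{0}\otimes x) = x, \qquad d_{0}(\rho_{1}\otimes x) = d_{0}(\sigma_{0}\otimes x) = d_{0}(\sigma_{1}\otimes x) = 0,
\]
and symmetrically for $d_{1}$ with the roles of the indices $0$ and $1$ exchanged. Checking that $s$, $d_{0}$, $d_{1}$ commute with the differentials is a short case-by-case verification on the five generators of $Z$, using only $d\tau = 0$, $d\rho_{i} = \sigma_{i}$, $d\sigma_{i} = 0$. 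The identity $(d_{0},d_{1})\circ s = \Delta$ is then immediate from $(d_{0}(\tau\otimes x), d_{1}(\tau\otimes x)) = (x,x)$.

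For the cofibration and weak equivalence properties of $s$, I would first observe that $s$ is degreewise injective, because $\tau$ is part of a $\mathbb{K}$-basis of $Z$, hence $s$ is a cofibration. The crucial computation is that of $H_{\ast}(Z)$: the two subcomplexes $\mathbb{K}\rho_{i}\oplus\mathbb{K}\sigma_{i}$ are acyclic, since $d$ restricts to an isomorphism $\mathbb{K}\rho_{i}\xrightarrow{\sim}\mathbb{K}\sigma_{i}$, so the direct sum splitting $Z \cong \mathbb{K}\tau\oplus(\mathbb{K}\rho_{0}\oplus\mathbb{K}\sigma_{0})\oplus(\mathbb{K}\rho_{1}\oplus\mathbb{K}\sigma_{1})$ identifies the inclusion $\iota\colon \mathbb{K}\tau\hookrightarrow Z$ with a quasi-isomorphism. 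Since $\mathbb{K}$ is a field, every chain complex is flat and tensoring with $X$ preserves quasi-isomorphisms; as $s$ coincides with $\iota\otimes X$, this shows that $s$ is a weak equivalence.

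For the fibration $(d_{0},d_{1})\colon Z\otimes X\twoheadrightarrow X\oplus X$, any pair $(x,y)\in X_{n}\oplus X_{n}$ is the image of $\rho_{0}\otimes x+\rho_{1}\otimes y$, so the map is degreewise surjective, hence a fibration in $Ch_{\mathbb{K}}$. There is no serious obstacle in this argument; the only substantive point is the design of $Z$, which is engineered so that the two contractible arms $(\rho_{i},\sigma_{i})$ furnish the two evaluation maps $d_{i}$, while the surviving cycle $\tau$ simultaneously produces the section $s$ and encodes the diagonal. Minor attention to Koszul signs on the odd-degree generators $\sigma_{i}$ is all that is required.
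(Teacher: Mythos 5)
Your proposal is correct and follows essentially the same route as the paper: the same section $s(x)=\tau\otimes x$, the same splitting of $Z$ into $\mathbb{K}\tau$ plus an acyclic complement to see that $s$ is an acyclic cofibration, and degreewise surjectivity of $(d_0,d_1)$ for the fibration. The only (immaterial) difference is your choice of $d_0$, which sends $\rho_0\otimes x$ to $x$ rather than to $0$ as in the paper; both choices are chain maps satisfying $(d_0,d_1)\circ s=\Delta$, so nothing changes.
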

\medskip{}

\begin{proof}
Let $s:X\rightarrow Z\otimes X$ be the map defined by $s(x)=\tau\otimes x$.
Given the differential of $Z$, the map $s$ is clearly an injective
morphism of $Ch_{\mathbb{K}}$, i.e. a cofibration. We can also write $Z\otimes X\cong(\tilde{Z}\otimes X)\oplus X$
where
\[
\tilde{Z}=\mathbb{K}\rho_0\oplus \mathbb{K}\rho_1\oplus \mathbb{K}\sigma_0\oplus \mathbb{K}\sigma_1
\]
 is an acyclic complex. The acyclicity of $\tilde{Z}$ implies that
$s$ is an acyclic cofibration. We now define a map $(d_0,d_1):Z\otimes X\twoheadrightarrow X\oplus X$
such that $(d_0,d_1)\circ s=(id_X,id_X)$ and $(d_0,d_1)$
is a fibration. The map $d_0$ is determined for every $x\in X$
by $d_0(\tau\otimes x)=x$ and $d_0(\sigma_0\otimes x)=d_0(\sigma_1\otimes x)=d_0(\rho_0\otimes x)=d_0(\rho_1\otimes x)=0$.
The map $d_1$ is determined for every $x\in X$ by $d_1(\rho_0\otimes x)=x$,
$d_1(\tau\otimes x)=x$, $d_1(\sigma_0\otimes x)=d_1(\sigma_1\otimes x)=d_1(\rho_1\otimes x)=0$.
The map $(d_0,d_1)$ is clearly a surjective chain complexes morphism,
i.e. a fibration, and satisfies the equality $(d_0,d_1)\circ s=(id_X,id_X)$.
\end{proof}
\medskip{}
The two advantages of this path object on $X$ are its writing in
the form of a tensor product with $X$ and its decomposition into a
direct sum of $X$ with an acyclic complex.

\medskip{}

\subsection{The prop ${End}_{Z(P)}$}

Consider the endomorphism prop of $Z(X)$:

\begin{align*}
End_{Z(X)}(m,n) & =Hom_{Ch_{\mathbb{K}}}(Z(X)^{\otimes m},Z(X)^{\otimes n})\\
 & \cong Hom_{Ch_{\mathbb{K}}}(Z^{\otimes m}\otimes X^{\otimes m},Z^{\otimes n}\otimes X^{\otimes n})\\
 & \cong(Z^{\otimes m})^*\otimes Z^{\otimes n}\otimes End_{X}(m,n).
\end{align*}
We define a prop of $P$-diagrams such that
\begin{align*}
{End}_{Z(P)}(m,n) & =(Z^{\otimes m})^*\otimes Z^{\otimes n}\otimes P(m,n)\\
 & =\bigoplus t_1^*\otimes...\otimes t_m^*\otimes t_1\otimes...\otimes t_n\otimes P(m,n),
\end{align*}
where $t_i\in\{\rho_0,\rho_1,\sigma_0,\sigma_1,\tau\}$, together
with the following structure maps:

\medskip{}
\textbf{-Vertical composition product.} Let
\[
\alpha\in t_1^*\otimes...\otimes t_k^*\otimes t_1\otimes...\otimes t_n\otimes P(k,n)
\]
and
\[
\beta\in u_1^*\otimes...\otimes u_m^*\otimes u_1\otimes...\otimes u_k\otimes P(m,k).
\]
We set
\[
\alpha\circ_v\beta=\begin{cases}
\alpha\circ_v^P\beta & \text{if $(u_1,...,u_k)=(t_1,...,t_k)$},\\
0 & \text{otherwise},
\end{cases}
\]
where $\circ_v^P$ is the vertical composition product of $P$.

\medskip{}
\textbf{-Horizontal product.} Let
\[
\alpha\in t_1^*\otimes...\otimes t_{m_1}^*\otimes t_1\otimes...\otimes t_{n_1}\otimes P(m_1,n_1)
\]
and
\[
\beta\in u_1^*\otimes...\otimes u_{m_2}^*\otimes u_1\otimes...\otimes u_{n_2}\otimes P(m_2,n_2).
\]
We set
\begin{eqnarray*}
\alpha\circ_h\beta & = & t_1^*\otimes...\otimes t_{m_1}^*\otimes u_1^*\otimes...\otimes u_{m_2}^*\\
 &  & \otimes t_1\otimes...\otimes t_{n_1}\otimes u_1\otimes...\otimes u_{n_2}\otimes(\alpha\mid_{P(m_1,n_1)}\circ_h^P\beta\mid_{P(m_2,n_2)})\\
 & \in & t_1^*\otimes...\otimes t_{m_1}^*\otimes u_1^*\otimes...\otimes u_{m_2}^*\\
 &  & \otimes t_1\otimes...\otimes t_{n_1}\otimes u_1\otimes...\otimes u_{n_2}\otimes P(m_1+n_1,m_2+n_2),
\end{eqnarray*}
where $\circ_h^P$ is the horizontal product of $P$.

\medskip{}
\textbf{-Actions of the symmetric groups.} Let $\alpha=t_1^*\otimes...\otimes t_m^*\otimes t_1\otimes...\otimes t_n\otimes\alpha_P\in{End}_{Z(P)}(m,n)$
with $\alpha_P\in P(m,n)$. The action of a permutation $\sigma\in\Sigma_m$
on the right of this prop element is given by $\alpha.\sigma=t_{\sigma(1)}^*\otimes...\otimes t_{\sigma(m)}^*\otimes t_1\otimes...\otimes t_n\otimes\alpha_P.\sigma$.
The action of a permutation $\tau\in\Sigma_n$ on the left of this
prop element is given by $\tau.\alpha=t_1^*\otimes...\otimes t_m^*\otimes t_{\tau^{-1}(1)}\otimes...\otimes t_{\tau^{-1}(n)}\otimes\tau.\alpha_P$.

\medskip{}

Let $X\in Ch_{\mathbb{K}}^P$ be a $P$-algebra. From the definition of ${End}_{Z(P)}(m,n)$,
we easily see that the prop morphism $P\rightarrow End_X$ induces a prop
morphism
\[
ev_X:End_{Z(P)}\rightarrow End_{Z(X)}
\]
satisfying the appropriate functoriality diagrams (see Section 2.1.3).

\medskip{}

\subsection{The prop $End_{\mathcal{Z}(P)}$}

\subsubsection{The pullback defining $End_{\mathcal{Z}(X)}$ and its explicit maps}

For every $(m,n)\in\mathbb{N}^2$, we have a pullback

\medskip{}

\[
\xymatrix{End_{\mathcal{Z}(X)}(m,n)\ar[r]\ar[d] & End_{X_0}(m,n)\oplus End_{X_1}(m,n)\ar[d]^{(d_0^{\otimes m})^*\oplus(d_1^{\otimes m})^*}\\
End_{Z(X)}(m,n)\ar[r]_-{(d_0^{\otimes n},d_1^{\otimes n})_*} & Hom_{Z(X),X_0}(m,n)\oplus Hom_{Z(X),X_1}(m,n)
}.
\]
For every $X\in Ch_{\mathbb{K}}^P$ and $(m,n)\in\mathbb{N}^2$ we have the isomorphisms
\begin{align*}
Hom_{X,Z(X)}(m,n) & =Hom_{Ch_{\mathbb{K}}}(X^{\otimes m},Z(X)^{\otimes n})\\
 & \cong Hom_{Ch_{\mathbb{K}}}(X^{\otimes m},Z^{\otimes n}\otimes X^{\otimes n})\\
 & \cong Z^{\otimes n}\otimes End_{X}(m,n)
\end{align*}
and
\begin{align*}
Hom_{Z(X),X_i}(m,n) & =Hom_{Ch_{\mathbb{K}}}(Z(X)^{\otimes m},X^{\otimes n})\\
 & \cong Hom_{Ch_{\mathbb{K}}}(Z^{\otimes m}\otimes X^{\otimes m},X^{\otimes n})\\
 & \cong(Z^{\otimes m})^*\otimes End_{X_i}(m,n).
\end{align*}
Applying these isomorphisms, we get a pullback

\medskip{}

\[
\xymatrix{End_{\mathcal{Z}(X)}(m,n)\ar[r]\ar[d] & End_{X_0}(m,n)\oplus End_{X_1}(m,n)\ar[d]^{\overline{(d_0^{\otimes m})^*}\oplus\overline{(d_1^{\otimes m})^*}}\\
(Z^{\otimes m})^*\otimes Z^{\otimes n}\otimes End_X(m,n)\ar[r]_-{\overline{(d_0^{\otimes n},d_1^{\otimes n})_*}} & (Z^{\otimes m})^*\otimes End_{X_0}(m,n)\oplus(Z^{\otimes m})^*\otimes End_{X_1}(m,n)
}.
\]

\medskip{}
We have to make explicit the maps $\overline{(d_0^{\otimes n},d_1^{\otimes n})_*}$
and $\overline{(d_0^{\otimes m})^*}\oplus\overline{(d_1^{\otimes m})^*}$
and replace $End_{X_0}(m,n)$, $End_{X_1}(m,n)$ and $End_X(m,n)$
by $P_0(m,n)$, $P_1(m,n)$ and $P(m,n)$ to obtain a prop of $P$-diagrams $\{End_{\mathcal{Z}(P)}(m,n)\}_{(m,n)\in\mathbb{N}^2}$
acting naturally on $End_{\mathcal{Z}(X)}(m,n)$,
$X\in Ch_{\mathbb{K}}^P$. Then we apply the same method to build a prop of $P$-diagrams $End_{\mathcal{Y}(P)}$ acting naturally
on $End_{\mathcal{Y}(X)}$, $X\in Ch_{\mathbb{K}}^P$.

\medskip{}

\begin{lem}
Let $\{\underline{z}_i\}_{i\in I}$ be a basis of $Z^{\otimes m}$. The map
\[
\overline{(d_1^{\otimes m})^*}:End_X(m,n)\rightarrow(Z^{\otimes m})^*\otimes End_X(m,n)
\]
 is defined by the formula
\[
\overline{(d_1^{\otimes m})^*}(\xi)=\sum_{j\in J}(\underline{z}_j^*\otimes\xi)=(\sum_{j\in J}\underline{z}_j^*)\otimes\xi,
\]
where $J$ is the subset of $I$ such that $d_1^{\otimes m}(\underline{z}_j\otimes \underline{x})=\underline{x}$
for $\underline{x}\in X^{\otimes m}$ and $j\in J$.
\end{lem}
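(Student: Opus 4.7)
My plan is to unpack the adjunction-style isomorphism
\[
Hom_{Ch_{\mathbb{K}}}(Z^{\otimes m}\otimes X^{\otimes m},X^{\otimes n})\cong (Z^{\otimes m})^*\otimes End_X(m,n)
\]
used in Section 2.4 and then just chase $\xi$ through it. Since $Z$ is finite-dimensional (of total dimension $5$), so is $Z^{\otimes m}$, and the isomorphism above has the following concrete form: if $\{\underline{z}_i\}_{i\in I}$ is a basis of $Z^{\otimes m}$ with dual basis $\{\underline{z}_i^*\}_{i\in I}$, then a morphism $\phi:Z^{\otimes m}\otimes X^{\otimes m}\rightarrow X^{\otimes n}$ corresponds to
\[
\sum_{i\in I}\underline{z}_i^*\otimes\phi(\underline{z}_i\otimes-)\in (Z^{\otimes m})^*\otimes Hom_{Ch_{\mathbb{K}}}(X^{\otimes m},X^{\otimes n}).
\]
I will state this identification at the start of the proof, as it is the only ``conceptual'' ingredient.

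Next I would apply this identification to $\phi=\xi\circ d_1^{\otimes m}$, which by definition represents the image of $\xi$ under the map $(d_1^{\otimes m})^*$ before the isomorphism. Choosing the obvious basis of $Z^{\otimes m}$ made of pure tensors $\underline{z}_j=t_{j_1}\otimes\cdots\otimes t_{j_m}$ with $t_{j_k}\in\{\rho_0,\rho_1,\sigma_0,\sigma_1,\tau\}$, I would use the explicit formulas for $d_1$ from the proof of Lemma 2.3 to compute $d_1^{\otimes m}(\underline{z}_j\otimes\underline{x})$ on each such basis element. Because $d_1$ vanishes on $\rho_1$, $\sigma_0$, $\sigma_1$ and is the identity (on the $X$-factor) on $\rho_0$ and $\tau$, the tensor $d_1^{\otimes m}(\underline{z}_j\otimes\underline{x})$ equals $\underline{x}$ when every $t_{j_k}$ lies in $\{\rho_0,\tau\}$ and is zero otherwise; this pins down the subset $J\subseteq I$ of the statement.

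Finally I would collect terms: for $j\in J$, the contribution to $\sum_{i\in I}\underline{z}_i^*\otimes(\xi\circ d_1^{\otimes m})(\underline{z}_i\otimes-)$ is $\underline{z}_j^*\otimes\xi$, while for $j\notin J$ the contribution is $0$. Factoring out $\xi$ gives exactly
\[
\overline{(d_1^{\otimes m})^*}(\xi)=\Bigl(\sum_{j\in J}\underline{z}_j^*\Bigr)\otimes\xi,
\]
which is the claimed formula. There is no real obstacle here: the only thing to be careful about is the canonical shuffle isomorphism $Z(X)^{\otimes m}\cong Z^{\otimes m}\otimes X^{\otimes m}$ implicit in the identification of hom spaces, but it is already the convention fixed in Section 2.3, so it only needs to be invoked, not re-derived. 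The argument is essentially a bookkeeping computation guided by the explicit formula for $d_1$.
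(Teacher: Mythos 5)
Your proposal is correct and follows essentially the same route as the paper: both unpack the finite-dimensional isomorphism $Hom(Z^{\otimes m}\otimes X^{\otimes m},X^{\otimes n})\cong(Z^{\otimes m})^*\otimes End_X(m,n)$ via the dual basis, compose with the shuffle $Z^{\otimes m}\otimes X^{\otimes m}\cong Z(X)^{\otimes m}$, and read off $J$ from the explicit formula for $d_1$. Your identification of $J$ as the pure tensors with all factors in $\{\rho_0,\tau\}$ is a slightly more explicit description than the paper gives, but the argument is the same.
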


\medskip{}

\begin{proof}
First we give an explicit inverse to the well known isomorphism
\begin{align*}
\lambda:U^*\otimes Hom_{Ch_{\mathbb{K}}}(V,V')\stackrel{\cong}{\rightarrow} & Hom_{Ch_{\mathbb{K}}}(U\otimes V,V')\\
\varphi\otimes f\mapsto & [u\otimes v\mapsto\varphi(u).f(v)]
\end{align*}
where $U$ is supposed to be of finite dimension. Let $\{u_i\}_{i\in I}$
be a basis of $U$. We have $\lambda=\sum_{i\in I}\lambda_i$
where
\begin{align*}
\lambda_i:\mathbb{K}u_i^*\otimes Hom_{Ch_{\mathbb{K}}}(V,V')\rightarrow & Hom_{Ch_{\mathbb{K}}}(\mathbb{K}u_i\otimes V,V')\\
u_i^*\otimes f\mapsto & u_i^*.f:u_i\otimes v\mapsto u_i^*(u_i).f(v)=f(v)
\end{align*}
so
\begin{align*}
\lambda^{-1}:Hom_{Ch_{\mathbb{K}}}(U\otimes V,V')\rightarrow & U^*\otimes Hom_{Ch_{\mathbb{K}}}(V,V')\\
f\mapsto & \sum_{i\in I}(u_i^*\otimes f\mid_{\mathbb{K}u_i\otimes V}).
\end{align*}

Let $\sigma:Z^{\otimes m}\otimes X^{\otimes m}\rightarrow(Z\otimes X)^{\otimes m}$
be the map permuting the variables. Recall that the map $d_1$ is determined for every $x\in X$ by $d_1(\rho_0\otimes x)=x$,
$d_1(\tau\otimes x)=x$, $d_1(\sigma_0\otimes x)=d_1(\sigma_1\otimes x)=d_1(\rho_1\otimes x)=0$.
The map
\[
\overline{(d_1^{\otimes m})^*}:Hom_{Ch_{\mathbb{K}}}(X^{\otimes m},X^{\otimes n})\rightarrow Hom_{Ch_{\mathbb{K}}}(Z^{\otimes m}\otimes X^{\otimes m},X^{\otimes n})\stackrel{\cong}{\rightarrow}(Z^{\otimes m})^*\otimes Hom_{Ch_{\mathbb{K}}}(X^{\otimes m},X^{\otimes n})
\]
 is defined by
\[
\xi\mapsto\xi\circ d_1^{\otimes m}\circ\sigma\mapsto\sum_{i\in I}(\underline{z}_i^*\otimes(\xi\circ d_1^{\otimes m}\circ\sigma)\mid_{\mathbb{K}\underline{z}_i\otimes V}).
\]
We obtain finally
\begin{align*}
\overline{(d_1^{\otimes m})^*}:End_X(m,n)\rightarrow & (Z^{\otimes m})^*\otimes End_{X}(m,n)\\
\xi\mapsto & \sum_{j\in J}(\underline{z}_j^*\otimes\xi)=(\sum_{j\in J}\underline{z}_j^*)\otimes\xi
\end{align*}
where $J$ is the subset of $I$ such that $d_1^{\otimes m}(\underline{z}_j\otimes \underline{x})=\underline{x}$
for $\underline{x}\in X^{\otimes m}$ and $j\in J$. If $j\notin J$ then $d_1^{\otimes m}\mid_{\mathbb{K}\underline{z}_j\otimes X^{\otimes m}}=0$.
\end{proof}

\medskip{}

Recall that the map $d_0:Z\otimes X\rightarrow X$ is defined for every $x\in X$
by $d_0(\tau\otimes x)=x$ and $d_0(\sigma_0\otimes x)=d_0(\sigma_1\otimes x)=d_0(\rho_0\otimes x)=d_0(\rho_1\otimes x)=0$.
As previously, the map $\overline{(d_0^{\otimes m})^*}$ has a form similar
to that of $\overline{(d_1^{\otimes m})^*}$, and we have determined
$\overline{(d_0^{\otimes m})^*}\oplus\overline{(d_1^{\otimes m})^*}$.

\medskip{}

\begin{lem}
The map $\overline{(d_0^{\otimes n},d_1^{\otimes n})_*}$ is determined by
\[
\overline{(d_0^{\otimes n},d_1^{\otimes n})_*}:\underline{z}_j^*\otimes \underline{z}_i'\otimes\xi\mapsto\sum_{k\in I}(\underline{z}_k^*\otimes((d_0^{\otimes n},d_1^{\otimes n})\circ \underline{z}_j^*(-).\underline{z}_i'\otimes\xi)\mid_{\mathbb{K}\underline{z}_k\otimes X^{\otimes m}}).
\]
\end{lem}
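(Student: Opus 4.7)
The plan is to imitate the computation carried out for $\overline{(d_1^{\otimes m})^*}$ in Lemma 2.5: unwind the chain of isomorphisms which identifies $(Z^{\otimes m})^* \otimes Z^{\otimes n} \otimes End_X(m,n)$ with $Hom_{Ch_{\mathbb{K}}}(Z^{\otimes m} \otimes X^{\otimes m}, Z^{\otimes n} \otimes X^{\otimes n})$ and $(Z^{\otimes m})^* \otimes End_{X_i}(m,n)$ with $Hom_{Ch_{\mathbb{K}}}(Z^{\otimes m} \otimes X^{\otimes m}, X^{\otimes n})$ for $i=0,1$, then read off the effect of post-composition by $(d_0^{\otimes n},d_1^{\otimes n})$ at the level of hom spaces, and finally apply the explicit formula for $\lambda^{-1}$ already recorded in the proof of Lemma 2.5.

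Concretely, I would first use the isomorphism $\lambda$ (combined with the shuffle permutation $\sigma : Z^{\otimes m} \otimes X^{\otimes m} \to (Z \otimes X)^{\otimes m}$ from Lemma 2.5) to represent a basis element $\underline{z}_j^* \otimes \underline{z}_i' \otimes \xi$ of $(Z^{\otimes m})^* \otimes Z^{\otimes n} \otimes End_X(m,n)$ by the chain map $Z^{\otimes m} \otimes X^{\otimes m} \to Z^{\otimes n} \otimes X^{\otimes n}$ sending $\underline{z} \otimes \underline{x}$ to $\underline{z}_j^*(\underline{z}) \cdot \underline{z}_i' \otimes \xi(\underline{x})$. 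This homomorphism is precisely what is abbreviated as $\underline{z}_j^*(-).\underline{z}_i' \otimes \xi$ in the statement. Post-composing with $(d_0^{\otimes n},d_1^{\otimes n}) : Z^{\otimes n} \otimes X^{\otimes n} \to X^{\otimes n} \oplus X^{\otimes n}$ then produces the homomorphism $Z^{\otimes m} \otimes X^{\otimes m} \to X^{\otimes n} \oplus X^{\otimes n}$ appearing inside the sum.

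The last step is to transport the result back through $\lambda^{-1}$ on the $(Z^{\otimes m})^*$ factor of the target, using the formula $\lambda^{-1}(f) = \sum_{k \in I} \underline{z}_k^* \otimes f|_{\mathbb{K}\underline{z}_k \otimes X^{\otimes m}}$ established in the proof of Lemma 2.5; specializing $f$ to the composite obtained in the previous step yields exactly the stated formula, decomposed according to the direct-sum splitting $(Z^{\otimes m})^* \otimes End_{X_0}(m,n) \oplus (Z^{\otimes m})^* \otimes End_{X_1}(m,n)$. The content is thus purely bookkeeping; the only care required is to track the shuffle $\sigma$ and to respect the splitting induced by $d_0$ versus $d_1$. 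I do not anticipate any genuine obstacle beyond this, which is why the lemma can be settled in a few lines.
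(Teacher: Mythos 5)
Your proposal is correct and follows essentially the same route as the paper's proof: identify the basis element with the chain map $\underline{z}_j^*(-).\underline{z}_i'\otimes\xi$ via $\lambda$, post-compose with $(d_0^{\otimes n},d_1^{\otimes n})$, and transport back with the explicit $\lambda^{-1}=\sum_{k}\underline{z}_k^*\otimes(-)\mid_{\mathbb{K}\underline{z}_k\otimes X^{\otimes m}}$ from Lemma 2.5. The paper additionally spells out the case analysis of $(d_0^{\otimes n},d_1^{\otimes n})$ on basis vectors of $Z^{\otimes n}$, but this is exactly the bookkeeping you defer to.
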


\begin{proof}
Let $\{\underline{z}'_i\}_{i\in I'}$ be the basis of $Z^{\otimes n}$. We have the isomorphism
\begin{align*}
(Z^{\otimes m})^*\otimes Z^{\otimes n}\otimes Hom_{Ch_{\mathbb{K}}}(X^{\otimes m},X^{\otimes n})\rightarrow & Hom_{Ch_{\mathbb{K}}}(Z^{\otimes m}\otimes X^{\otimes m},Z^{\otimes n}\otimes X^{\otimes n})\\
\underline{z}_j^*\otimes \underline{z}_i'\otimes\xi\mapsto & \underline{z}_j^*(-).\underline{z}_i'\otimes\xi
\end{align*}
that we compose with
\begin{align*}
(d_0^{\otimes n},d_1^{\otimes n}):Z^{\otimes n}\otimes X^{\otimes n}\rightarrow & X_0^{\otimes n}\oplus X_1^{\otimes n}\\
\underline{z}_j\otimes x\mapsto & \begin{cases}
x\oplus x & \text{if $j\in J'$},\\
x\oplus0\: or\:0\oplus x & \text{otherwise},
\end{cases}
\end{align*}
where $J'$ is the subset of $I$ such that $d_0\mid_{\mathbb{K}\underline{z}_j\otimes X^{\otimes n}}\neq0$
and $d_1\mid_{\mathbb{K}\underline{z}_j\otimes X^{\otimes n}}\neq0$ for $j\in J'$.
Finally we compose with the isomorphism
\begin{align*}
Hom_{Ch_{\mathbb{K}}}(Z^{\otimes m}\otimes X^{\otimes m},X_0^{\otimes n}\oplus X_1^{\otimes n})\stackrel{\cong}{\rightarrow} & (Z^{\otimes m})^*\otimes Hom_{Ch_{\mathbb{K}}}(X^{\otimes m},X_0^{\otimes n}\oplus X_1^{\otimes n})\\
f\mapsto & \sum_{i\in I}(\underline{z}_i^*\otimes f\mid_{Kz_i\otimes X^{\otimes m}})
\end{align*}
and get the map
\[
\overline{(d_0^{\otimes n},d_1^{\otimes n})_*}:\underline{z}_j^*\otimes \underline{z}_i'\otimes\xi\mapsto\sum_{k\in I}(\underline{z}_k^*\otimes((d_0^{\otimes n},d_1^{\otimes n})\circ \underline{z}_j^*(-).\underline{z}_i'\otimes\xi)\mid_{\mathbb{K}\underline{z}_k\otimes X^{\otimes m}}).
\]
\end{proof}

\subsubsection{The associated prop of $P$-diagrams}

The key observation is that these two maps $\overline{(d_0^{\otimes m})^*}\oplus\overline{(d_1^{\otimes m})^*}$
and $\overline{(d_0^{\otimes n},d_1^{\otimes n})_*}$, fixing
the prop structure on $End_{\mathcal{Z}(X)}(m,n)$
in function of those of $(Z^{\otimes m})^*\otimes Z^{\otimes n}\otimes End_X(m,n)$
and $End_{X_0}(m,n)\oplus End_{X_1}(m,n)$, do not modify the
operations $\xi\in End_X(m,n)$ themselves. Therefore, we replace
$End_{X_0}(m,n)$, $End_{X_1}(m,n)$ and $End_X(m,n)$ by $P_0(m,n)$,
$P_1(m,n)$ and $P(m,n)$ to get this new pullback

\medskip{}

\[
\xymatrix{End_{\mathcal{Z}(P)}(m,n)\ar[r]\ar[d] & P_0(m,n)\oplus P_1(m,n)\ar[d]^{\overline{(d_0^{\otimes m})^*}\oplus\overline{(d_1^{\otimes m})^*}}\\
(Z^{\otimes m})^*\otimes Z^{\otimes n}\otimes P(m,n)\ar[r]_-{\overline{(d_0^{\otimes n},d_1^{\otimes n})_*}} & (Z^{\otimes m})^*\otimes P_0(m,n)\oplus(Z^{\otimes m})^*\otimes P_1(m,n)
}.
\]
The explicit formulae of the applications defining this pullback, given by Lemmas 2.5 and 2.6, show that these replacements
do not break the prop structure transfer. Thus we get the desired prop of $P$-diagrams $End_{\mathcal{Z}(P)}$
having the same shape as that of $End_{\mathcal{Z}(X)}$
and thus acting naturally on the associated diagram of $P$-algebras via the evaluation morphism
\[
ev_X:End_{\mathcal{Z}(P)}\rightarrow End_{\mathcal{Z}(X)}.
\]

\subsection{The prop $End_{\mathcal{Y}(P)}$ and the functorial path object in $P$-algebras}

Now let us define $End_{\mathcal{Y}(P)}$. For every
$(m,n)\in\mathbb{N}^2$, the pullback

\medskip{}

\[
\xymatrix{End_{\mathcal{Y}(X)}(m,n)\ar[r]\ar[d] & End_{\mathcal{Z}(X)}(m,n)\ar[d]^{(s^{\otimes m})^*\circ pr}\\
End_X(m,n)\ar[r]_-{(s^{\otimes n})_*} & Hom_{X,Z(X)}(m,n)
}
\]
induces via the isomorphims explained at the beginning of 3.3 and 3.4.1 a pullback
\[
\xymatrix{End_{\mathcal{Y}(X)}(m,n)\ar[r]\ar[d] & End_{\mathcal{Z}(X)}(m,n)\ar[d]^{\overline{(s^{\otimes m})^*\circ pr}}\\
End_X(m,n)\ar[r]_-{\overline{(s^{\otimes n})_*}} & Z^{\otimes n}\otimes End_X(m,n)
}.
\]

\medskip{}

In the same manner as before, given that $s:X\rightarrow Z\otimes X$ sends every
$x\in X$ to $\tau\otimes x$, the map $\overline{(s^{\otimes m})^*}$
is of the form
\begin{align*}
(Z^{\otimes m})^*\otimes Z^{\otimes n}\otimes End_{X}(m,n)\rightarrow & Z^{\otimes n}\otimes End_{X}(m,n)\\
\underline{z}_j^*\otimes \underline{z}_i'\otimes\xi\mapsto & \begin{cases}
\underline{z}_i'\otimes\xi & \text{if $j\in K$},\\
0 & \text{otherwise},
\end{cases}
\end{align*}
where $K$ is a certain subset of $I$ and $\overline{(s^{\otimes n})_*}$
is of the form
\begin{align*}
End_X(m,n)\rightarrow & Z^{\otimes n}\otimes End_X(m,n)\\
\xi\mapsto & \sum_{i\in K'}\underline{z}_i'\otimes\xi
\end{align*}
where $K'$ is a certain subset of $I'$. These two maps $\overline{(s^{\otimes m})^*\circ pr}$
and $\overline{(s^{\otimes n})_*}$, fixing the prop structure on
$End_{\mathcal{Y}(X)}(m,n)$ in function of those of $End_{\mathcal{Z}(X)}(m,n)$
and $End_X(m,n)$, do not modify the operations $\xi\in End_X(m,n)$
themselves. Therefore, we replace $End_X(m,n)$ by $P(m,n)$ and
$End_{\mathcal{Z}(X)}(m,n)$
by $End_{\mathcal{Z}(P)}(m,n)$
to get this new pullback

\medskip{}

\[
\xymatrix{End_{\mathcal{Y}(P)}(m,n)\ar[r]\ar[d] & End_{\mathcal{Z}(P)}(m,n)\ar[d]^{\overline{(s^{\otimes m})^*\circ pr}}\\
P(m,n)\ar[r]_-{\overline{(s^{\otimes n})_*}} & Z^{\otimes n}\otimes P(m,n)
}.
\]
The explicit formulae of the applications defining this pullback show that these replacements
do not break the prop structure transfer. Thus we get the desired prop of $P$-diagrams $End_{\mathcal{Y}(P)}$ having
the same shape as that of $End_{\mathcal{Y}(X)}$ and thus acting
naturally on the associated diagram of $P$-algebras via the evaluation morphism
\[
ev_X:End_{\mathcal{Y}(P)}\rightarrow End_{\mathcal{Y}(X)}.
\]
We finally obtain the following lemma:

\medskip{}

\begin{lem}
There is a commutative diagram of props
\[
\xymatrix{ & End_{\mathcal{Y}(P)}\ar[r]^{ev_X}\ar[d]_{\pi} & End_{\mathcal{Y}(X)}\ar[d]\\
P\ar[r]^{=} & P\ar[r] & End_{\mathcal{V}(X)}
}
\]

\end{lem}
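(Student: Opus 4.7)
The plan is to define $\pi$ from the defining pullback of $End_{\mathcal{Y}(P)}$ and then verify that the right square commutes by tracing elements through the pullback structures.

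First I would take $\pi$ to be the map to $P$ induced by the left vertical arrow of the pullback square defining $End_{\mathcal{Y}(P)}$. To see that the collection $\{\pi(m,n)\}_{(m,n)\in\mathbb{N}^2}$ assembles into a morphism of props, I would use the observation at the end of Section 2.5 that the prop structure on $End_{\mathcal{Y}(P)}$ is transferred from those of $End_{\mathcal{Z}(P)}$ and of $P$ through the pullback, so that both vertical arrows of the pullback square are prop morphisms by construction. Equivalently, one checks on generators that the vertical and horizontal composition products of $End_{\mathcal{Y}(P)}$ descend to those of $P$ when one forgets the decorations in $(Z^{\otimes m})^*$ and $Z^{\otimes n}$: the composition structure on $End_{\mathcal{Y}(P)}$ acts on the $P$-factor exactly as in $P$ itself.

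Next, to verify commutativity of the right square (the left square being trivial since its bottom arrow is the identity on $P$), I would trace an element $\phi \in End_{\mathcal{Y}(P)}(m,n)$, viewed through the defining pullback as a pair $(\phi_{\mathcal{Z}}, \alpha)$ with $\phi_{\mathcal{Z}} \in End_{\mathcal{Z}(P)}(m,n)$ and $\alpha \in P(m,n)$ compatible under the maps $\overline{(s^{\otimes m})^* \circ pr}$ and $\overline{(s^{\otimes n})_*}$. Applying $\pi$ and then the $P$-algebra structure map $P \to End_{\mathcal{V}(X)} \cong End_X$ yields $\alpha_X$, the image of $\alpha$ under the algebra structure. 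On the other hand, $ev_X$ sends $\phi$ to the pair $(ev_X(\phi_{\mathcal{Z}}), \alpha_X) \in End_{\mathcal{Y}(X)}(m,n)$, since the evaluation morphism is built to act by the $P$-algebra structure on the $P$-factor of each prop of $P$-diagrams. The right-hand vertical arrow $End_{\mathcal{Y}(X)} \to End_{\mathcal{V}(X)}$, induced by the inclusion of the constant subdiagram $\mathcal{V}(X) \hookrightarrow \mathcal{Y}(X)$ via the functoriality of endomorphism props with respect to diagram shapes recalled at the end of Section 1.2, is, after identifying $End_{\mathcal{V}(X)}$ with $End_X$, precisely the map to the $End_X$-factor of the defining pullback of $End_{\mathcal{Y}(X)}$. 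It therefore returns $\alpha_X$ as well, giving the desired commutativity.

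The main obstacle to watch for is making sure that the composition products of $End_{\mathcal{Y}(P)}$ genuinely project to those of $P$ under $\pi$. Since the structure maps of the props of $P$-diagrams are twisted by the $Z$-decorations (vertical composition even imposes a matching condition on these decorations, as in Section 2.3), I would carefully verify, using the explicit formulas of Lemmas 2.5 and 2.6 together with the discussion in Section 2.5, that the $P$-part of any composite is simply the composite of the $P$-parts. This is exactly what is needed for $\pi$ to be a prop morphism and, combined with the element chase above, yields Lemma 2.7.
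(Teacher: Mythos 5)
Your proposal is correct and follows essentially the same route as the paper, which states Lemma 2.7 without a separate proof as a direct consequence of the pullback constructions of Sections 2.3--2.5: $\pi$ is the projection to the $P$-factor of the defining pullback, and the right square commutes because $ev_X$ acts by the algebra structure map on the $P$-factors while the restriction $End_{\mathcal{Y}(X)}\to End_{\mathcal{V}(X)}\cong End_X$ is the corresponding projection. Your element chase and the check that the composition products project onto those of $P$ simply make explicit what the paper summarizes by saying the replacements ``do not break the prop structure transfer.''
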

\medskip{}

Now we want to prove that the morphism $P\rightarrow End_{\mathcal{V}(X)}$
lifts to a morphism $P\rightarrow End_{\mathcal{Y}(P)} \stackrel{ev_X}{\rightarrow} End_{\mathcal{Y}(X)}$:

\medskip{}

\begin{lem}
The map $\pi$ is an acyclic fibration in the category
of props.
\end{lem}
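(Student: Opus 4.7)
My plan is to reduce the statement to a componentwise check in $Ch_{\mathbb{K}}$ and then exploit the fact that each pullback in the construction is ``$P$-linear'' in an appropriate sense.

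Since fibrations and weak equivalences in $\mathcal{P}$ are detected componentwise via the forgetful functor to $\Sigma$-biobjects (Section 1.3), it suffices to prove that each $\pi(m,n): End_{\mathcal{Y}(P)}(m,n) \to P(m,n)$ is an acyclic fibration in $Ch_{\mathbb{K}}$. The key observation is that all the structural maps appearing in the nested pullbacks defining $End_{\mathcal{Z}(P)}(m,n)$ and $End_{\mathcal{Y}(P)}(m,n)$ are, by the explicit formulas of Lemmas 2.5 and 2.6 (together with the analogues for $\overline{(s^{\otimes m})^* \circ pr}$ and $\overline{(s^{\otimes n})_*}$ recorded in Section 2.5), of the form $\varphi \otimes \mathrm{id}_{P(m,n)}$ for some chain map $\varphi$ built only from $Z$ and its dual. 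Since $\mathbb{K}$ is a field, tensoring with $P(m,n)$ is exact and commutes with finite limits, so we obtain a canonical identification
\[
End_{\mathcal{Y}(P)}(m,n) \cong E(m,n) \otimes P(m,n),
\]
where $E(m,n)$ is the chain complex obtained by performing the same pullbacks with $\mathbb{K}$ replacing $P(m,n)$ throughout; under this identification $\pi(m,n) = \epsilon(m,n) \otimes \mathrm{id}_{P(m,n)}$ for the evident projection $\epsilon(m,n): E(m,n) \to \mathbb{K}$.

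It then suffices to show $\epsilon(m,n)$ is an acyclic fibration, since tensoring with any chain complex over a field preserves both degreewise surjections and quasi-isomorphisms, hence acyclic fibrations. One can identify $E(m,n)$ with $End_{\mathcal{Y}(\mathbb{K})}(m,n)$ and $\epsilon(m,n)$ with the canonical projection onto $End_{\mathcal{V}(\mathbb{K})}(m,n) = \mathbb{K}$, whence $\epsilon(m,n)$ is an acyclic fibration by the (non-functorial) path object construction of \cite{Fre1} (Section 8) applied to $X = \mathbb{K}$. Alternatively, using the decomposition $Z = \tilde Z \oplus \mathbb{K}\tau$ with $\tilde Z$ acyclic, one can unravel the two nested pullbacks by hand and exhibit $\epsilon(m,n)$ directly as a surjective quasi-isomorphism.

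The main obstacle is confirming the factorization $End_{\mathcal{Y}(P)}(m,n) \cong E(m,n) \otimes P(m,n)$; this is routine but requires carefully verifying, from the explicit formulas, that the maps $\overline{(d_0^{\otimes n}, d_1^{\otimes n})_*}$, $\overline{(d_i^{\otimes m})^*}$, $\overline{(s^{\otimes n})_*}$ and $\overline{(s^{\otimes m})^* \circ pr}$ each act by the identity on the $P$-factor. Once this is in hand the remaining argument reduces to a purely scalar computation with the contractible complex $Z$, which is entirely tractable.
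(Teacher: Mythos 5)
Your proof is correct and follows essentially the same strategy as the paper: reduce to a componentwise statement in $Ch_{\mathbb{K}}$, observe from the explicit formulas that every structural map in the defining pullbacks is the identity on the $P(m,n)$-factor so that the whole construction is $(-)\otimes P(m,n)$ applied to a purely ``scalar'' diagram built from $Z$, and conclude because tensoring over a field preserves acyclic fibrations. The only cosmetic difference is in the scalar step: where you specialize to $X=\mathbb{K}$ and cite the non-functorial path-object result of Fresse, the paper exhibits the scalar map directly as the dual pushout-product of the acyclic cofibration $f_s:\mathbb{K}\rightarrowtail Z^{\otimes m}$ against the fibration $g_{d_0,d_1}:Z^{\otimes n}\twoheadrightarrow \mathbb{K}p_0\oplus\mathbb{K}p_1$ and invokes axiom MM1', together with stability of acyclic fibrations under base extension.
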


\medskip{}

\begin{proof}
According to the model category structure on $\mathcal{P}$, it is
sufficient to prove that for every $(m,n)\in\mathbb{N}^2$, $\pi(m,n)$
is an acyclic fibration of chain complexes. The map $\pi(m,n)$
is given by the base extension

\[
\pi(m,n)=P(m,n)\bigtimes_{Hom_{P,Z(P)}(m,n)}\phi(m,n)\bigtimes_{Hom_{Z(P),P_0}(m,n)\oplus Hom_{Z(P),P_1}(m,n)}(P_0(m,n)\oplus P_1(m,n))
\]
where
\[
\phi(m,n):End_{Z(P)}(m,n)\rightarrow Hom_{P,Z(P)}(m,n)\bigtimes_{P_0(m,n)\oplus P_1(m,n)}(Hom_{Z(P),P_0}(m,n)\oplus Hom_{Z(P),P_1}(m,n))
\]
comes from the diagram

\medskip{}

\[
\xymatrix{End_{Z(P)}(m,n)\ar[dr]^{\phi(m,n)}\ar@/_{1pc}/[ddr]_{\overline{(s^{\otimes m})^*}}\ar@/^{1pc}/[drr]^{\overline{(d_0^{\otimes n},d_1^{\otimes n})_*}}\\
 & pullback\ar[d]\ar[r] & Hom_{Z(P),P_0}(m,n)\oplus Hom_{Z(P),P_1}(m,n))\ar[d]^{\overline{(s^{\otimes m})^*}\oplus\overline{(s^{\otimes m})^*}}\\
 & Hom_{P,Z(P)}(m,n)\ar[r]_-{\overline{(d_0^{\otimes n},d_1^{\otimes n})_*}} & P_0(m,n)\oplus P_1(m,n)
}
\]
i.e.

\medskip{}

\[
\xymatrix{(Z^{\otimes m})^*\otimes Z^{\otimes n}\otimes P(m,n)\ar[dr]^{\phi(m,n)}\ar@/_{1pc}/[ddr]_{\overline{(s^{\otimes m})^*}}\ar@/^{1pc}/[drr]^{\overline{(d_0^{\otimes n},d_1^{\otimes n})_*}}\\
 & pullback\ar[d]\ar[r] & (Z^{\otimes m})^*\otimes(P_0(m,n)\oplus P_1(m,n))\ar[d]^{\overline{(s^{\otimes m})^*}\oplus\overline{(s^{\otimes m})^*}}\\
 & Z^{\otimes n}\otimes P(m,n)\ar[r]_-{\overline{(d_0^{\otimes n},d_1^{\otimes n})_*}} & P_0(m,n)\oplus P_1(m,n)
}.
\]

\medskip{}

We have an isomorphism
\begin{align*}
P_0(m,n)\oplus P_1(m,n)\stackrel{\cong}{\rightarrow} & (\mathbb{K}p_0\oplus \mathbb{K}p_1)\otimes P(m,n)\\
p\oplus p'\mapsto & p_0\otimes p+p_1\otimes p
\end{align*}
where $p_0$ and $p_1$ are two generators of degree $0$. The previous computations give
\begin{align*}
\overline{(d_0^{\otimes n},d_1^{\otimes n})_*}:Z^{\otimes n}\otimes P(m,n)\rightarrow & (\mathbb{K}p_0\oplus \mathbb{K}p_1)\otimes P(m,n)\\
\underline{z}_i'\otimes p\mapsto & \begin{cases}
(p_0\oplus p_1)\otimes p & \text{if $i\in J'$},\\
p_0\otimes p\: or\: p_1\otimes p & \text{otherwise},
\end{cases}
\end{align*}
and the map
\[
\overline{(s^{\otimes m})^*}\oplus\overline{(s^{\otimes m})^*}:(Z^{\otimes m})^*\otimes(\mathbb{K}p_0\oplus \mathbb{K}p_1)\otimes P(m,n)\rightarrow(\mathbb{K}p_0\oplus \mathbb{K}p_1)\otimes P(m,n)
\]
is defined by
\[
\underline{z}_j^*\otimes(\lambda p_0\oplus\mu p_1)\otimes p\mapsto\begin{cases}
(\lambda p_0\oplus\mu p_1)\otimes p\: or\:\lambda p_0\otimes p\: or\:\mu p_1\otimes p, & \text{if $j\in K$},\\
0=0\otimes p & \text{otherwise}.
\end{cases}
\]
We have similar results for the two maps starting from $(Z^{\otimes m})^*\otimes Z^{\otimes n}\otimes P(m,n)$.
We deduce that the previous diagram is the image under the functor
$-\otimes P(m,n)$ of the dual pushout-product

\medskip{}

\[
\xymatrix{Hom_{Ch_{\mathbb{K}}}(Z^{\otimes m},Z^{\otimes n})\ar[dr]^{(f_s^*,(g_{d_0,d_1})_*)}\ar@/_{1pc}/[ddr]_{f_s^*}\ar@/^{1pc}/[drr]^{(g_{d_0,d_1})_*}\\
 & pullback\ar[d]\ar[r] & Hom_{Ch_{\mathbb{K}}}(Z^{\otimes m},\mathbb{K}p_0\oplus \mathbb{K}p_1)\ar[d]^{f_s^*}\\
 & Hom_{Ch_{\mathbb{K}}}(\mathbb{K},Z^{\otimes n})\ar[r]_-{(g_{d_0,d_1})_*} & Hom_{Ch_{\mathbb{K}}}(\mathbb{K},\mathbb{K}p_0\oplus \mathbb{K}p_1)
}
\]
modulo the isomorphisms

\[
Z^{\otimes n}\cong Hom_{Ch_{\mathbb{K}}}(\mathbb{K},Z^{\otimes n}),
\]

\[
(Z^{\otimes m})^*\otimes Z^{\otimes n}\cong Hom_{Ch_{\mathbb{K}}}(Z^{\otimes m},Z^{\otimes n}),
\]

\[
(Z^{\otimes m})^*\otimes(\mathbb{K}p_0\oplus \mathbb{K}p_1)\cong Hom_{Ch_{\mathbb{K}}}(Z^{\otimes m},\mathbb{K}p_0\oplus \mathbb{K}p_1)
\]
and
\[
\mathbb{K}p_0\oplus \mathbb{K}p_1\cong Hom_{Ch_{\mathbb{K}}}(\mathbb{K},\mathbb{K}p_0\oplus \mathbb{K}p_1).
\]
The map $g_{d_0,d_1}:Z^{\otimes n}\rightarrow Kp_0\oplus Kp_1$
is surjective so it is a fibration of chain complexes. Recall that
we have a decomposition of $Z$ into $Z=\widetilde{Z}\oplus \mathbb{K}\tau$ where
$\widetilde{Z}$ is acyclic, which implies a decomposition of $Z^{\otimes m}$
of the form $Z^{\otimes m}\cong S_m\oplus \mathbb{K}(\tau^{\otimes n})$ where
$S_m$ is acyclic because it is a sum of tensor products containing
$\widetilde{Z}$. The map $f_s$ is an injection sending $\mathbb{K}$ on
$\mathbb{K}(\tau^{\otimes n})$ so it is a cofibration, and $S_m$ is acyclic
so $f_s$ is an acyclic cofibration. Applying the axiom MM1' in
$Ch_{\mathbb{K}}$ we conclude that $(f_s^*,(g_{d_0,d_1})_*)$ is
an acyclic fibration. Therefore $\phi(m,n)=(f_s^*,(g_{d_0,d_1})_*)\otimes id_{P(m,n)}$
is an acyclic fibration, and so is $\pi(m,n)$, given that
the class of acyclic fibrations is stable by base extension.
\end{proof}

\medskip{}

We have proved the following result:

\begin{prop}
There is a functorial $P$-action $P\rightarrow End_{\mathcal{Y}(X)}$, and consequently a functorial path object $Z:(Ch_{\mathbb{K}})^P\rightarrow (Ch_{\mathbb{K}})^P$ in the category of cofibrant-fibrant $P$-algebras $(Ch_{\mathbb{K}})^P$.
\end{prop}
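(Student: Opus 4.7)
The plan is to obtain the functorial $P$-action by a single lifting argument in the semi-model category of props, then read off the functorial path object from the shape of the diagram $\mathcal{Y}(X)$.

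First I would apply the lifting property to the commutative square of Lemma 2.7. By Lemma 2.8 the map $\pi : End_{\mathcal{Y}(P)} \twoheadrightarrow P$ is an acyclic fibration in $\mathcal{P}$, and by hypothesis $P$ is cofibrant, so the lifting axiom of the semi-model structure on $\mathcal{P}$ (Theorem 1.10, or Theorem 1.13 when $\mathbb{K}$ has characteristic zero) produces a prop morphism $\tilde{\varphi} : P \rightarrow End_{\mathcal{Y}(P)}$ such that $\pi \circ \tilde{\varphi} = \mathrm{id}_P$. This is the only step that uses homotopical input; because all the work has been packaged into Lemma 2.8, it is essentially immediate.

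Next I would transport this lift to the endomorphism prop of the diagram. Composing with the evaluation morphism $ev_X : End_{\mathcal{Y}(P)} \rightarrow End_{\mathcal{Y}(X)}$ constructed in Section 2.5, I obtain for every $P$-algebra $X$ a prop morphism
\[
P \xrightarrow{\tilde{\varphi}} End_{\mathcal{Y}(P)} \xrightarrow{ev_X} End_{\mathcal{Y}(X)}.
\]
As explained in Section 2.1.3, the prop $End_{\mathcal{Y}(P)}$ was designed so that $ev_X$ is natural in $X$: any morphism of $P$-algebras $f : X \rightarrow Y$ makes the evaluation squares commute. Composing with a fixed $\tilde{\varphi}$ preserves this naturality, so the assignment $X \mapsto (P \rightarrow End_{\mathcal{Y}(X)})$ is functorial on $(Ch_{\mathbb{K}})^P$. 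This yields the functor $\mathcal{Y} : (Ch_{\mathbb{K}})^P \rightarrow \mathrm{Func}(I,Ch_{\mathbb{K}})^P$ fitting in the commutative square over $\mathrm{Func}(I,Ch_{\mathbb{K}})$.

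Finally I would extract the path object. Because $End_{\mathcal{Y}(P)}$ was built by the same pullback scheme as $End_{\mathcal{Y}(X)}$, a $P$-action on the diagram $\mathcal{Y}(X)$ is by construction a collection of compatible $P$-actions on the vertices $X$, $Z(X)$, $X$ that commute with the structure maps $s$, $d_0$, $d_1$. The commutativity of the lower triangle of Lemma 2.7 guarantees that the two vertices of $\mathcal{Y}(X)$ corresponding to $X$ carry exactly the original $P$-algebra structure $P \rightarrow End_X$, and that $s$, $d_0$, $d_1$ become morphisms of $P$-algebras. Since $s$ is a weak equivalence and $(d_0,d_1)$ a fibration in $Ch_{\mathbb{K}}$ by Lemma 2.4, the object $Z(X)$ equipped with this $P$-algebra structure defines a path object on $X$ in $(Ch_{\mathbb{K}})^P$, functorial in $X$. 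The hardest conceptual step has already been carried out in Sections 2.3--2.5 (building $End_{\mathcal{Y}(P)}$ with the right shape) and in Lemma 2.8 (acyclicity of $\pi$); the proposition itself is the harvest of these preparations via a single application of the lifting axiom.
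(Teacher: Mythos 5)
Your proposal is correct and follows exactly the paper's intended argument: the paper states Proposition 2.9 immediately after Lemma 2.8 with the words ``we have proved the following result,'' the proof being precisely the lifting of $\mathrm{id}_P$ against the acyclic fibration $\pi$ of Lemma 2.8 (using cofibrancy of $P$), followed by composition with the natural evaluation morphisms $ev_X$ and the observation that commutativity of the square in Lemma 2.7 forces the restriction to the vertices $X_0$, $X_1$ of $\mathcal{Y}(X)$ to recover the original $P$-algebra structure. No gaps; only a minor slip in the internal theorem numbering for the model structure on props.
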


\subsection{Proof of the final result}

Consider now the square of inclusions of diagrams
\[
\xymatrix{\mathcal{T}(X)\ar@{^{(}->}[d]_{u}\ar@{^{(}->}[r]^t & \mathcal{V}(X)\ar@{^{(}->}[d]^v\\
\mathcal{Z}(X)\ar@{^{(}->}[r]_w & \mathcal{Y}(X)
}
\]
where $\mathcal{V}(X)$, $\mathcal{Z}(X)$ and $\mathcal{Y}(X)$ are the diagrams defined previously and
$\mathcal{T}(X)$ is the diagram $\{X_0,X_1\}$ consisting of two copies of $X$ and no arrows between them.
This square of inclusions induces the following commutative square of endomorphism props

\medskip{}

\[
\xymatrix{End_{\mathcal{Y}(X)}\ar[r]^-{w_*}\ar[d]_{v_*} & End_{\mathcal{Z}(X)}\ar[d]^{u^*}\\
End_{\mathcal{V}(X)}\ar[r]_{t^*} & End_{\mathcal{T}(X)}
}
\]
where $u^*$, $v^*$, $t^*$ and $w^*$ are the maps induced
by the inclusions of the associated diagrams of $P$-algebras. We have
a commutative diagram of props of $P$-diagrams reflecting
this square

\medskip{}

\[
\xymatrix{End_{\mathcal{Y}(P)}\ar[r]^-{w_*}\ar[d]_{v_*} & End_{\mathcal{Z}(P)}\ar[d]^{u^*}\\
End_{\mathcal{V}(P)}=P\ar[r]_{t^*} & End_{\mathcal{T}(P)}=P_0\times P_1
}
\]
where $v^*$ is the acyclic fibration $\pi$ of Lemma
2.8 and $u^*$ is a fibration because it is clearly surjective in
each biarity (recall that the surjective morphisms are the fibrations
of $Ch_{\mathbb{K}}$ and that the fibrations of $\mathcal{P}$ are determined
componentwise). Now we can use this commutative square to prove the final result:

\medskip{}

\begin{thm}
Let $P$ be a cofibrant prop and $\varphi,\psi:P\rightarrow P$ two
homotopic prop morphisms, then there exists a diagram of functors
\[
\varphi^*\stackrel{\sim}{\leftarrow}Z\stackrel{\sim}{\rightarrow}\psi^*
\]
where $Z$ is the path object functor defined in Proposition 2.9 and the
natural transformations are pointwise acyclic fibrations.
\end{thm}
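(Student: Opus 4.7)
The overall goal is to construct a prop morphism $\rho_{\mathcal{Z}} : P \to End_{\mathcal{Z}(P)}$ whose composite with $u^* : End_{\mathcal{Z}(P)} \twoheadrightarrow P \times P$ equals $(\varphi, \psi)$. Given such a $\rho_{\mathcal{Z}}$, the evaluation $ev_X \circ \rho_{\mathcal{Z}}$ at any $X \in (Ch_{\mathbb{K}})^P$ with structure $\rho_X$ will promote $\mathcal{Z}(X) = \{X_0 \twoheadleftarrow Z(X) \twoheadrightarrow X_1\}$ to a diagram of $P$-algebras whose endpoints carry the structures $\varphi^* X$ and $\psi^* X$. The chain-level acyclic fibrations $d_0, d_1$ of Lemma 2.3 then become $P$-algebra morphisms $Z(X) \to \varphi^* X$ and $Z(X) \to \psi^* X$, and naturality of $ev_X$ assembles them into the required natural transformations $Z \to \varphi^*$ and $Z \to \psi^*$, which are pointwise acyclic fibrations.

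\textbf{Building $\rho_{\mathcal{Z}}$.} I would exploit the commutative square of Lemma 2.7. First, since $\pi = v_*$ is an acyclic fibration in $\mathcal{P}$ and $P$ is cofibrant, the lifting axiom produces some $\tilde{\varphi} : P \to End_{\mathcal{Y}(P)}$ with $\pi \circ \tilde{\varphi} = \varphi$. Setting $\alpha := w_* \circ \tilde{\varphi}$, the commutativity of the square forces $u^* \circ \alpha = t^* \circ v_* \circ \tilde{\varphi} = \Delta \circ \varphi = (\varphi, \varphi)$. Next I would factor the fold $P \sqcup P \to P$ in $\mathcal{P}$ as $P \sqcup P \rightarrowtail \mathrm{Cyl}(P) \stackrel{\sim}{\twoheadrightarrow} P$ to obtain a cylinder object, whose coface maps $i_0, i_1 : P \to \mathrm{Cyl}(P)$ are acyclic cofibrations (as $P$ is cofibrant). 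If $h : \mathrm{Cyl}(P) \to P$ is a left homotopy witnessing $\varphi \sim \psi$ and $p$ denotes the cylinder projection, then $H := (\varphi \circ p, h) : \mathrm{Cyl}(P) \to P \times P$ interpolates $H \circ i_0 = (\varphi, \varphi)$ and $H \circ i_1 = (\varphi, \psi)$.

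\textbf{Lifting step.} The square
\[
\xymatrix{
P \ar[r]^-{\alpha} \ar@{>->}[d]_{i_0}^{\sim} & End_{\mathcal{Z}(P)} \ar@{->>}[d]^{u^*} \\
\mathrm{Cyl}(P) \ar@{-->}[ur]^{\tilde{H}} \ar[r]_-{H} & P \times P
}
\]
commutes by construction, $i_0$ is an acyclic cofibration with cofibrant domain $P$, and $u^*$ is a fibration (componentwise surjective, as observed just before Theorem 2.10), so the lifting axiom of the semi-model structure on $\mathcal{P}$ produces a filler $\tilde{H}$. Then $\rho_{\mathcal{Z}} := \tilde{H} \circ i_1$ satisfies $u^* \circ \rho_{\mathcal{Z}} = H \circ i_1 = (\varphi, \psi)$ by the bottom triangle. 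The explicit descriptions of $u^*$ in Lemmas 2.5 and 2.6 ensure that the induced action $ev_X \circ \rho_{\mathcal{Z}}$ restricts to $(\rho_X \circ \varphi, \rho_X \circ \psi)$ on $X_0 \oplus X_1$, making $d_0, d_1$ the required $P$-algebra morphisms.

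\textbf{Main obstacle.} The delicate step is the lifting above. One must confirm that $\mathrm{Cyl}(P)$ sits inside the semi-model structure on props with the cofibrancy needed to invoke the lifting axiom, and check via Lemmas 2.5--2.6 that forcing $u^* \rho_{\mathcal{Z}} = (\varphi, \psi)$ literally produces the boundary $P$-structures $\varphi^*$ and $\psi^*$, rather than ones merely equivalent to them. Once the lift is produced and this compatibility is established, Lemma 2.3 and the functoriality of $ev_X$ in $X$ take care of the remaining weak-equivalence and naturality assertions.
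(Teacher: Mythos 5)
Your proposal is correct and follows essentially the same route as the paper: lift $\varphi$ through the acyclic fibration $\pi$ to $End_{\mathcal{Y}(P)}$, push into $End_{\mathcal{Z}(P)}$ to get a map over $(\varphi,\varphi)$, then lift the homotopy $(\varphi\circ s_0,h):\widetilde{P}\to P_0\times P_1$ against the fibration $u^*$ along the acyclic cofibration $d_0$, and restrict along $d_1$ to land over $(\varphi,\psi)$. Your worry about the lifting axiom is resolved exactly as in the paper, since $d_0=i_0$ has cofibrant domain $P$, and the explicit formulas of Lemmas 2.5--2.6 guarantee the boundary structures are literally $\varphi^*$ and $\psi^*$.
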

\medskip{}

\begin{proof}
This proof follows the arguments of the proof of Theorem 8.4 in \cite{Fre1}.
We consider a cylinder object of $P$ fitting in a diagram of the form:
\[
\xymatrix{P\vee P\ar@{>->}[]!R+<4pt,0pt>;[r]_{(d_0,d_1)} & \widetilde{P}\ar@{->>}[r]_{s_0}^{\sim} & P}
\]
The components $d_0$ and $d_1$ of the morphism $(d_0,d_1)$ are acyclic cofibrations
because $P$ is cofibrant by assumption (see Lemma 4.4 in \cite{DS}) and
$s_0$ an acyclic fibration. The fact that $\varphi$ and $\psi$
are homotopic implies the existence of a lifting in

\medskip{}

\[
\xymatrix{P\vee P\ar@{>->}[]!D-<0pt,4pt>;[d]_{(d_0,d_1)}\ar[r]^{(\varphi,\psi)} & P\ar[d]\\
\widetilde{P}\ar@{-->}[ur]^h\ar[r] & 0
}
\]

\medskip{}
We produce the lifting
\[
\xymatrix{I\ar@{>->}[]!D-<0pt,4pt>;[d]\ar[r] & End_{\mathcal{Y}(P)}\ar@{->>}[d]_{\sim}^{v^*}\\
P\ar@{-->}[ur]^k\ar[r]_{\varphi} & P
}
\]
(by the axiom MC4 of model categories, see \cite{DS}) and form
$(\varphi\circ s_0,h):\widetilde{P}\rightarrow P_0\times P_1$
in order to get the following commutative diagram:

\medskip{}

\[
\xymatrix{P\ar[r]^-k\ar@{>->}[]!D-<0pt,4pt>;[d]_{d_0}^{\sim} & End_{\mathcal{Y}(P)}\ar[r]^-{w^*} & End_{\mathcal{Z}(P)}\ar@{->>}[d]^{u^*}\\
\widetilde{P}\ar@{-->}[urr]^{l}\ar[rr]_{(\varphi\circ s_0,h)} &  & P_0\times P_1
}
\]
We have $(\varphi\circ s_0,h)\circ d_0=(\varphi\circ s_0\circ d_0,h\circ d_0)=(\varphi,\varphi)$
and $u^*\circ w^*\circ k=t^*\circ v^*\circ k=t^*\circ\varphi=(\varphi,\varphi)$
so this diagram is indeed commutative and there exists a lifting (axiom
MC4) $l:\widetilde{P}\rightarrow End_{\mathcal{Z}(P)}$.
Then we form $l\circ d_1:P\rightarrow End_{\mathcal{Z}(P)}$
and observe that $u^*\circ l\circ d_1=(\varphi\circ s_0,h)\circ d_1=(\varphi\circ s_0\circ d_1,h\circ d_1)=(\varphi,\psi)$,
i.e. we obtain the following diagram:

\medskip{}

\[
\xymatrix{ & End_{\mathcal{Z}(P)}\ar[r]^{ev_X}\ar[d]_{u^*} & End_{\mathcal{Z}(X)}\ar[d]\\
P\ar[r]_{(\varphi,\psi)}\ar[ur]^{l\circ d_1} & P_0\times P_1\ar[r]_{ev_X} & End_{\mathcal{T}(X)}
}
\]
and consequently a diagram of functors $\varphi^*\stackrel{\sim}{\twoheadleftarrow}Z\stackrel{\sim}{\twoheadrightarrow}\psi^*$.
The functorial path object $Z$ on $Ch_{\mathbb{K}}$ preserves weak equivalences
and restrict to an endofunctor of $wCh_{\mathbb{K}}$, so the associated functorial
path object $Z$ on $Ch_{\mathbb{K}}^P$ do the same. Moreover, the natural
transformations are weak equivalences in each component, so this diagram
restricts to the desired diagram of endofunctors of $wCh_{\mathbb{K}}^P$.
\end{proof}

\medskip{}

Now we can conclude the proof of Theorem 0.1 in the case $\mathcal{E}=Ch_{\mathbb{K}}$:
\medskip{}

\begin{thm}
Let $Ch_{\mathbb{K}}$ be the category of $\mathbb{Z}$-graded chain complexes
over a field $\mathbb{K}$ of characteristic zero. Let $\varphi:P\stackrel{\sim}{\rightarrow}Q$
be a weak equivalence between two cofibrant props. The map $\varphi$
gives rise to a functor $\varphi^*: w(Ch_{\mathbb{K}})^Q\rightarrow w(Ch_{\mathbb{K}})^P$
which induces a weak equivalence of simplicial sets $\mathcal{N}\varphi^*:\mathcal{N} w(Ch_{\mathbb{K}})^Q\stackrel{\sim}{\rightarrow}\mathcal{N} w(Ch_{\mathbb{K}})^P$.
\end{thm}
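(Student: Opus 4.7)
My plan is to reduce the weak equivalence statement for $\varphi$ to the zigzag result of Theorem 2.10 by constructing a homotopy inverse to $\varphi$ at the prop level and transporting it through the classifying space construction.

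First I would invoke Whitehead's theorem in the category of props. Since $\mathbb{K}$ has characteristic zero, Theorem 1.11 endows $\mathcal{P}$ with a genuine model structure. Every prop in $Ch_{\mathbb{K}}$ is fibrant, because fibrations in $\mathcal{P}$ are the componentwise surjections and every chain complex over a field is fibrant. Hence $\varphi:P\stackrel{\sim}{\rightarrow}Q$ is a weak equivalence between cofibrant-fibrant objects, so it admits a homotopy inverse, i.e.\ a prop morphism $\psi:Q\rightarrow P$ together with homotopies $\psi\circ\varphi\simeq id_{P}$ and $\varphi\circ\psi\simeq id_{Q}$ in $\mathcal{P}$.

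Next I would exploit the strict contravariant functoriality of the pullback of algebra structures along prop morphisms, which gives $(\psi\circ\varphi)^{*}=\varphi^{*}\circ\psi^{*}$ and $(\varphi\circ\psi)^{*}=\psi^{*}\circ\varphi^{*}$. Because these pullback functors leave the underlying chain complex unchanged, they preserve underlying weak equivalences and restrict to functors between $w(Ch_{\mathbb{K}})^{P}$ and $w(Ch_{\mathbb{K}})^{Q}$. Applying Theorem 2.10 to the homotopic pair $\psi\varphi,id_{P}:P\rightrightarrows P$ produces a zigzag of pointwise weak equivalences of endofunctors $(\psi\varphi)^{*}\stackrel{\sim}{\leftarrow}Z\stackrel{\sim}{\rightarrow}id_{P}^{*}$ on $w(Ch_{\mathbb{K}})^{P}$. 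Since a natural weak equivalence of functors between categories with weak equivalences induces a simplicial homotopy between the nerves of the associated maps on weak-equivalence subcategories, one obtains $\mathcal{N}\varphi^{*}\circ\mathcal{N}\psi^{*}\simeq id_{\mathcal{N}w(Ch_{\mathbb{K}})^{P}}$ in $sSet$. Applying Theorem 2.10 symmetrically with $Q$ in place of $P$ and $\varphi\psi\simeq id_{Q}$ gives $\mathcal{N}\psi^{*}\circ\mathcal{N}\varphi^{*}\simeq id_{\mathcal{N}w(Ch_{\mathbb{K}})^{Q}}$. Consequently $\mathcal{N}\varphi^{*}$ is a homotopy equivalence of simplicial sets and, in particular, a weak equivalence.

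The genuinely hard step in the whole argument is, of course, Theorem 2.10, which has already been proved in Section 2.6. Once Theorem 2.10 is in hand, the remaining ingredients, namely Whitehead's theorem in $\mathcal{P}$ and the passage from a natural zigzag of weak equivalences to a simplicial homotopy of nerves, are entirely standard. The one point that deserves care is that Whitehead is invoked in a setting where the model structure is given by Theorem 1.11, and the characteristic-zero hypothesis on $\mathbb{K}$ is exactly what makes this hypothesis available; in positive characteristic one would instead have to work in $\mathcal{P}_{0}$ and restrict to props with non-empty inputs or outputs, which explains the refined statement of Theorem 1.13 alluded to after Theorem 0.1.
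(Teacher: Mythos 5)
Your proposal is correct and follows essentially the same route as the paper: everything reduces to Theorem 2.10 (homotopic endomorphisms of a cofibrant prop induce a zigzag of pointwise weak equivalences, hence homotopic nerve maps) together with the observation that every prop in $Ch_{\mathbb{K}}$ is fibrant. The only difference is cosmetic: you produce a two-sided homotopy inverse $\psi$ at once via Whitehead's theorem and apply Theorem 2.10 twice, whereas the paper first treats the case of an acyclic cofibration $\varphi$ (where lifting against the terminal map gives a strict retraction $s$ with $s\circ\varphi=id_P$, so Theorem 2.10 is needed only for $\varphi\circ s\sim id_Q$) and then deduces the general case from Ken Brown's lemma; both variants are valid.
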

\medskip{}

\begin{proof}
Recall that $\mathcal{P}$ is the category of props in $Ch_{\mathbb{K}}$. Let us suppose first
that $\varphi:P\stackrel{\sim}{\rightarrowtail}Q$ is an acyclic cofibration
between two cofibrants props of $\mathcal{P}$. All objects in $Ch_{\mathbb{K}}$
are fibrant, so by definition of the model category structure on $\mathcal{P}$
the prop $P$ is fibrant and thus we have the following lifting

\medskip{}

\[
\xymatrix{P\ar[r]^{=}\ar@{>->}[]!D-<0pt,4pt>;[d]_{\varphi}^{\sim} & P\ar@{->>}[d]\\
Q\ar[r]\ar@{-->}[ur]^s & pt
}
\]
where $s:Q\stackrel{\sim}{\rightarrow}P$ satisfies
\[
\begin{cases}
s\circ\varphi=id_P\\
\varphi\circ s\sim id_Q
\end{cases}
\]
 (the relation $\sim$ is the homotopy relation for the model category
structure of $\mathcal{P}$). These maps induce functors $\varphi^*:(w\mathcal{E}^{cf})^Q\rightarrow(w\mathcal{E}^{cf})^P$
and $s^*:(w\mathcal{E}^{cf})^P\rightarrow(w\mathcal{E}^{cf})^Q$.
Applying the simplicial nerve functor, we obtain
\[
\begin{cases}
\mathcal{N}(s\circ\varphi)^*=\mathcal{N}\varphi^*\circ\mathcal{N}s^*=id_{(w\mathcal{E}^{cf})^P}\\
\mathcal{N}(\varphi\circ s)^*=\mathcal{N}s^*\circ\mathcal{N}\varphi^*\sim id_{(w\mathcal{E}^{cf})^Q}
\end{cases}
\]
so $\mathcal{N}\varphi^*$ is a homotopy equivalence in $sSet$,
which implies that it is a weak equivalence of simplicial sets . The
functor
\begin{align*}
\mathcal{P}\rightarrow & sSet\\
P\mapsto & \mathcal{N} w(\mathcal{E}^{cf})^P
\end{align*}
is defined between two model categories, and maps the acyclic cofibrations
between cofibrant objects to weak equivalences, so it preserves weak
equivalences between cofibrant objects according to Brown's lemma.
\end{proof}

\medskip{}

\subsection{The general case of a category $\mathcal{E}$ tensored over $Ch_{\mathbb{K}}$}

To complete our results we explain how the proof of Theorem 2.10 extends to a category
$\mathcal{E}$ tensored over $Ch_{\mathbb{K}}$.

\medskip{}

\begin{thm}
Let $\mathcal{E}$ be a cofibrantly generated symmetric monoidal model
category over $Ch_{\mathbb{K}}$. Let $\varphi:P\stackrel{{\sim}}{\rightarrow}Q$
be a weak equivalence between two cofibrant props defined in $Ch_{\mathbb{K}}$.
This morphism $\varphi$ gives rise to a functor $\varphi^*: w(\mathcal{E}^{c})^Q\rightarrow w(\mathcal{E}^{c})^P$
which induces a weak equivalence of simplicial sets $\mathcal{N}\varphi^*:\mathcal{N} w(\mathcal{E}^{c})^Q\rightarrow\mathcal{N} w(\mathcal{E}^{c})^P$.
\end{thm}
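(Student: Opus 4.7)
The plan is to mimic the argument developed for $\mathcal{E}=Ch_{\mathbb{K}}$ in Sections 2.2--2.6, systematically replacing the internal tensor product by the external tensor product $\otimes: Ch_{\mathbb{K}}\times\mathcal{E}\rightarrow\mathcal{E}$ coming from the enrichment of $\mathcal{E}$ over $Ch_{\mathbb{K}}$. The crucial point is that the auxiliary chain complex $Z$ used in Section 2.2 is finite dimensional, hence dualizable in $Ch_{\mathbb{K}}$, so tensoring with $Z$ interacts well with the external hom bifunctor $Hom_{\mathcal{E}}(-,-):\mathcal{E}^{op}\times\mathcal{E}\rightarrow Ch_{\mathbb{K}}$. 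This will let us transport every algebraic identity used in the pullback constructions to the enriched setting.

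First I would define the path object on $X\in\mathcal{E}$ by $Z(X):=Z\otimes X$, using the external tensor product. The factorization $X\stackrel{\sim}{\rightarrowtail}Z\otimes X\twoheadrightarrow X\oplus X$ is built exactly as in Lemma 2.4: writing $Z\cong\tilde{Z}\oplus \mathbb{K}\tau$ with $\tilde{Z}$ acyclic, the map $s=\tau\otimes(-)$ is a cofibration by the axiom MM1 applied to $\mathbb{K}\tau\hookrightarrow Z$ in $Ch_{\mathbb{K}}$ and to $X$ cofibrant, and it is acyclic because $\tilde{Z}\otimes X$ is. The map $(d_0,d_1)$ remains a fibration by the dual axiom MM1' applied to the surjective chain complex map $Z\twoheadrightarrow \mathbb{K}p_0\oplus \mathbb{K}p_1$.

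Next I would rerun the construction of the props of $P$-diagrams $End_{Z(P)}$, $End_{\mathcal{Z}(P)}$ and $End_{\mathcal{Y}(P)}$ from Sections 2.3--2.5, still as objects of $\mathcal{P}$. The key isomorphisms
\[
Hom_{\mathcal{E}}(Z^{\otimes m}\otimes X^{\otimes m},Z^{\otimes n}\otimes X^{\otimes n})\cong (Z^{\otimes m})^{*}\otimes Z^{\otimes n}\otimes End_X(m,n)
\]
hold in this enriched setting: the external adjunction reduces the left-hand side to $Hom_{Ch_{\mathbb{K}}}(Z^{\otimes m},Z^{\otimes n}\otimes End_X(m,n))$, and the finite dimensionality of $Z^{\otimes m}$ yields the claimed tensor decomposition. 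Consequently the explicit formulae of Lemmas 2.5 and 2.6 are formally identical, the three props are defined by the same pullback diagrams, and the evaluation morphisms $ev_X: End_{\mathcal{Y}(P)}\rightarrow End_{\mathcal{Y}(X)}$ are functorial in $X\in\mathcal{E}^P$ by the same argument. The verification that $\pi:End_{\mathcal{Y}(P)}\rightarrow P$ is an acyclic fibration in $\mathcal{P}$ (the analogue of Lemma 2.8) is unchanged, since it reduces to a pushout-product statement internal to $Ch_{\mathbb{K}}$ and does not involve $\mathcal{E}$ at all. With this in hand, the liftings in $\mathcal{P}$ used in the proof of Theorem 2.10 go through verbatim, produce the zigzag $\varphi^{*}\stackrel{\sim}{\leftarrow}Z\stackrel{\sim}{\rightarrow}\psi^{*}$ of natural transformations on $w(\mathcal{E}^{c})^P$, and yield the homotopy between $\mathcal{N}\varphi^{*}$ and $\mathcal{N}\psi^{*}$. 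Brown's lemma applied to the functor $P\mapsto\mathcal{N}w(\mathcal{E}^{c})^P$ then concludes as in Theorem 2.10.

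The main obstacle I anticipate is that, unlike in $Ch_{\mathbb{K}}$, not every object of $\mathcal{E}$ is fibrant, so one must check carefully that the functorial path object $Z(X)=Z\otimes X$ really serves as a path object in $\mathcal{E}^P$ when $X$ is a cofibrant $P$-algebra, and that the evaluation maps $ev_X$ preserve the relevant cofibration and weak equivalence data. The good behavior follows precisely from the enriched axiom MM1 for the external tensor product, which is built into the definition of a symmetric monoidal model category over $Ch_{\mathbb{K}}$, but verifying this compatibility at each step of the pullback definitions of $End_{\mathcal{Y}(P)}$ is where the most care is needed.
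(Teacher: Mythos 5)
Your proposal follows essentially the same route as the paper: define $Z(X)=Z\otimes X$ via the external tensor product, observe that the external hom adjunction and the finite dimensionality of $Z$ reproduce the isomorphisms $Hom_{\mathcal{E}}(Z^{\otimes m}\otimes X^{\otimes m},Z^{\otimes n}\otimes X^{\otimes n})\cong (Z^{\otimes m})^{*}\otimes Z^{\otimes n}\otimes End_X(m,n)$, note that the props of $P$-diagrams and the acyclic fibration $\pi$ live entirely in $Ch_{\mathbb{K}}$ and are therefore unchanged, and conclude with the same zigzag and Brown's lemma.

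One step of your argument is not correct as stated, although it turns out to be dispensable: you claim that $(d_0,d_1)\otimes id_X$ remains a fibration in $\mathcal{E}$ ``by the dual axiom MM1' applied to the surjective chain complex map $Z\twoheadrightarrow \mathbb{K}p_0\oplus\mathbb{K}p_1$''. Axiom MM1' concerns the pullback-hom $Hom_{\mathcal{E}}(-,-)$ landing in $Ch_{\mathbb{K}}$; it says nothing about $p\otimes id_X$ being a fibration in $\mathcal{E}$ for $p$ a fibration of chain complexes, and indeed the surjection $Z\to\mathbb{K}p_0\oplus\mathbb{K}p_1$ admits no chain-level section (its kernel is not acyclic), so no splitting argument rescues this either. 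The paper avoids the issue: in the general case it only establishes that $s\otimes id_X$ is an acyclic cofibration and that $d_0\otimes id_X$, $d_1\otimes id_X$ are weak equivalences (via MM1 for the external product plus Brown's lemma, using that all objects of $Ch_{\mathbb{K}}$ are cofibrant and $X$ is cofibrant), which is all the zigzag $\varphi^{*}\stackrel{\sim}{\leftarrow}Z\stackrel{\sim}{\rightarrow}\psi^{*}$ requires, since the lifting arguments that need fibrations all take place at the level of props in $Ch_{\mathbb{K}}$. Relatedly, your closing worry about fibrancy is moot for the same reason: the statement is about $w(\mathcal{E}^{c})^P$, and no fibrancy in $\mathcal{E}$ is ever used.
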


\medskip{}

\begin{proof}
The chain complex $Z$ defined previously is itself
the path object on $C^0$, so we have the commutative diagram

\medskip{}

\[
\xymatrix{ &  & C^{0}\\
C^0\ar@/^{1pc}/[urr]^{=}\ar@/^{-1pc}/[drr]_{=}\ar@{>->}[]!R+<4pt,0pt>;[r]_-{s}^-{\sim} & Z\ar@{->>}[ur]_{d_0}^{\sim}\ar@{->>}[dr]^{d_1}_{\sim}\\
 &  & C^0
}.
\]
Given that $C^0$ is the unit of $Ch_{\mathbb{K}}$, for any $X\in\mathcal{E}$
we have $C^0\otimes X\cong X$, thus by applying the functor
$-\otimes X$ we get the commutative diagram

\medskip{}

\[
\xymatrix{ &  & X_0\\
X\ar@/^{1pc}/[urr]^{=}\ar@/^{-1pc}/[drr]_{=}\ar@{>->}[]!R+<4pt,0pt>;[r]_-{s\otimes id_X}^-{\sim} & Z\otimes X\ar[ur]_{d_0\otimes id_X}^{\sim}\ar[dr]^{d_1\otimes id_X}_{\sim}\\
 &  & X_1
}.
\]

\medskip{}
The axiom MM1 for the external tensor product $\otimes$ implies
that if $X$ is cofibrant, then the functor $-\otimes X$ preserves
acyclic cofibrations of $Ch_{\mathbb{K}}$ (all the objects of $Ch_{\mathbb{K}}$ are
cofibrant) and thus, by Brown's lemma, it preserves the weak equivalences.
Therefore $s\otimes id_X$ is still an acyclic cofibration and
$d_0\otimes id_X$, $d_1\otimes id_X$ are weak equivalences.
Moreover, given the properties of $\otimes$ and the fact that
endomorphism props in $Ch_{\mathbb{K}}$ for objects of $\mathcal{E}$ are
defined with the external hom bifunctor $Hom_{\mathcal{E}}(-,-)$
of $\mathcal{E}$, we have the following isomorphisms:

\begin{align*}
End_{Z\otimes X}(m,n) & =Hom_{\mathcal{E}}((Z\otimes X)^{\otimes m},(Z\otimes X)^{\otimes n})\\
 & \cong Hom_{\mathcal{E}}(Z^{\otimes m}\otimes X^{\otimes m},Z^{\otimes n}\otimes X^{\otimes n})\\
 & \cong(Z^{\otimes m})^*\otimes Z^{\otimes n}\otimes End_X(m,n)
\end{align*}

\begin{align*}
Hom_{X,Z\otimes X}(m,n) & =Hom_{\mathcal{E}}(X^{\otimes m},(Z\otimes X)^{\otimes n})\\
 & \cong Hom_{\mathcal{E}}(X^{\otimes m},Z^{\otimes n}\otimes X^{\otimes n})\\
 & \cong Z^{\otimes n}\otimes End_X(m,n),
\end{align*}
and
\begin{align*}
Hom_{Z\otimes X,X_i}(m,n) & =Hom_{\mathcal{E}}((Z\otimes X)^{\otimes m},X^{\otimes n})\\
 & \cong Hom_{\mathcal{E}}(Z^{\otimes m}\otimes X^{\otimes m},X^{\otimes n})\\
 & \cong(Z^{\otimes m})^*\otimes End_{X_i}(m,n).
\end{align*}
The proofs of 2.3, 2.4 and 2.5 extend without changes to the case of a category $\mathcal{E}$ tensored over $Ch_{\mathbb{K}}$:
we still work in $Ch_{\mathbb{K}}$, and as before the operations associated to $s\otimes id_X$,
$d_0\otimes id_X$ and $d_1\otimes id_X$ in the pullbacks
do not transform the elements of $End_X(m,n)$ themselves, so that
the replacement of $End_X(m,n)$ by $P(m,n)$ does not break the
transfer of prop structure in these pullbacks. We obtain a diagram
of endofunctors $\varphi^*\stackrel{\sim}{\leftarrow}Z\stackrel{\sim}{\rightarrow}\psi^*$
of $(\mathcal{E}^{c})^P$ where the natural transformations are
weak equivalences in each component, so this diagram restricts to the
desired diagram of endofunctors of $w(\mathcal{E}^{c})^P$.
Theorem 0.1 is proved in the general case.
\end{proof}

\bigskip{}

\section{Extension of the results in the colored prop setting}

\bigskip{}

\begin{defn}
Let $C$ be a non-empty set, called the \emph{set of colors}, and $\mathcal{C}$ be
a symmetric monoidal category.

(1) A \emph{$C$-colored $\Sigma$-biobject} $M$ is a double sequence of
objects $\{M(m,n)\in\mathcal{E}\}_{(m,n)\in\mathbb{N}^2}$ where
each $M(m,n)$ admits commuting left $\Sigma_m$-action and right
$\Sigma_n$-action as well as a decomposition
\[
M(m,n)=colim_{c_i,d_i\in C}M(c_1,...,c_m;d_1,...,d_n)
\]
compatible with these actions. The objects $M(c_1,...,c_m;d_1,...,d_n)$
should be thought as spaces of operations with colors $c_1,...,c_m$
indexing the $m$ inputs and colors $d_1,...,d_n$ indexing the
$n$ outputs.

(2) A \emph{$C$-colored prop $P$} is a $C$-colored $\Sigma$-biobject
endowed with a horizontal composition
\begin{align*}
\circ_h:P(c_{11},...,c_{1m_1};d_{11},...,d_{1n_1})\otimes...\otimes P(c_{k1},...,c_{km_k};d_{k1},...,d_{kn_1}) & \rightarrow\\
P(c_{11},...,c_{km_k};d_{k1},...,d_{kn_k})\subseteq P(m_1+...+m_k,n_1+...+n_k)\\
\end{align*}
and a vertical composition
\[
\circ_v:P(c_1,...,c_k;d_1,...,d_n)\otimes P(a_1,...,a_m;b_1,...,b_k)\rightarrow P(a_1,...,a_m;d_1,...,d_n)\subseteq P(m,n)
\]
which is equal to zero unless $b_i=c_i$ for $1\leq i\leq k$.
These two compositions satisfy associativity axioms (we refer the
reader to \cite{JY} for details).
\end{defn}

\medskip{}

\begin{defn}
(1) Let $\{X_c\}_C$ be a collection of objects of $\mathcal{E}$.
The \emph{$C$-colored endomorphism prop} $End_{\{X_c\}_{C}}$ is defined
by
\[
End_{\{X_c\}_{C}}(c_1,...,c_m;d_1,...,d_n)=Hom_{\mathcal{E}}(X_{c_1}\otimes...\otimes X_{c_m},X_{d_1}\otimes...\otimes X_{d_n})
\]
with horizontal composition given by the tensor product of homomorphisms
and vertical composition given by the composition of homomorphisms
with matching colors.

(2) Let $P$ be a $C$-colored prop. A \emph{$P$-algebra} is the data of
a collection of objects $\{X_c\}_{C}$ and a $C$-colored prop morphism
$P\rightarrow End_{\{X_c\}_{C}}$.
\end{defn}

\medskip{}

\begin{example}
Let $I$ be a small category, $P$ a prop in $\mathcal{C}$. We can
build an $ob(I)$-colored prop $P_I$ such that the $P_I$-algebras
are the $I$-diagrams of $P$-algebras in $\mathcal{E}$ in the same
way as that of \cite{Mar1}.
\end{example}

\medskip{}

To endow the category of colored props with a model category structure,
the cofibrantly generated symmetric monoidal model structure on $\mathcal{C}$
is not sufficient. We have to suppose moreover that the domains of
the generating cofibrations and acyclic generating cofibrations are
small (cf \cite{Hir1}, 10.4.1), that is to say, the model structure
is strongly cofibrantly generated:

\medskip{}

\begin{thm}
(cf. \cite{JY}, Theorem 1.1) Let $C$ be a non-empty set. Let
$\mathcal{C}$ be a strongly cofibrantly generated symmetric monoidal
model category with a symmetric monoidal fibrant replacement functor,
and either:

(1) a cofibrant unit and a cocommutative interval, or

(2) functorial path data.

Then the category $\mathcal{P}_C$ of $C$-colored props in $\mathcal{C}$
forms a strongly cofibrantly generated model category with fibrations
and weak equivalences defined componentwise in $\mathcal{C}$.
\end{thm}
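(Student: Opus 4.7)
The plan is to transfer the componentwise model structure on the category of $C$-colored $\Sigma$-biobjects along a free-forgetful adjunction, following the standard transfer principle (for instance Theorem~11.3.2 of \cite{Hir1}). First I would introduce the $C$-colored analogue $\mathbb{B}_C$ of the indexing category of Section~1.2, whose objects are pairs $(\underline{c};\underline{d})\in C^{m}\times C^{n}$ with morphisms given by color-preserving permutations, and note that $\mathcal{C}^{\mathbb{B}_C}$ inherits a strongly cofibrantly generated model structure with weak equivalences and fibrations defined pointwise in $\mathcal{C}$. The free colored prop functor $F:\mathcal{C}^{\mathbb{B}_C}\rightleftarrows\mathcal{P}_C:U$ is then constructed in parallel with the uncolored case of Appendix~A of \cite{Fre1}, by a colimit over a category of directed graphs whose edges carry colors from $C$ and whose vertices are decorated by the input $\Sigma$-biobject in a color-compatible way.

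Having declared the generating (acyclic) cofibrations in $\mathcal{P}_C$ to be the images under $F$ of those of $\mathcal{C}^{\mathbb{B}_C}$, and weak equivalences and fibrations in $\mathcal{P}_C$ to be those maps whose images under $U$ are such, the transfer principle reduces the theorem to two verifications: (i) the domains of the transferred generators are small relative to the associated cell complexes, and (ii) every relative $FJ$-cell complex is sent by $U$ to a pointwise weak equivalence, where $J$ denotes the set of generating acyclic cofibrations in $\mathcal{C}^{\mathbb{B}_C}$. Point (i) follows from the strong cofibrant generation hypothesis on $\mathcal{C}$ together with the fact that $U$ creates filtered colimits, so that smallness in $\mathcal{C}^{\mathbb{B}_C}$ is preserved when passing through the adjunction.

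The main obstacle is point (ii), the acyclicity of cells, and this is where the two alternative hypotheses enter. I would deploy Quillen's path object argument: it suffices to construct, functorially for every fibrant colored prop $P$, a factorization $P\stackrel{\sim}{\rightarrow}\mathrm{Path}(P)\twoheadrightarrow P\times P$ in $\mathcal{P}_C$ whose first arrow is a pointwise weak equivalence. Under hypothesis~(1), a cocommutative interval $I$ in $\mathcal{C}$ is a coalgebra in the symmetric monoidal category $\mathcal{C}$, so applying the external tensor product $I\otimes(-)$ entrywise to $P$ preserves both horizontal and vertical composition products; composing with the symmetric monoidal fibrant replacement yields the desired path object on $P$. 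Under hypothesis~(2), the functorial path data provides the construction directly, the symmetric monoidality of the fibrant replacement again ensuring that the colored prop structure is inherited. In both cases, the resulting factorization of the diagonal of $P$ pulls back along any pushout of $F(j)$ for $j\in J$ to show that the pushout is a pointwise acyclic cofibration, and transfinite composition preserves this property.

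Once (i) and (ii) are established, the transfer theorem produces the desired cofibrantly generated model structure on $\mathcal{P}_C$; strong cofibrant generation of the transferred structure then follows because $U$ preserves filtered colimits and the transferred generators inherit smallness with respect to the whole class of transferred cell complexes. The colored nature of the problem introduces no new essential difficulty beyond indexing bookkeeping in $\mathbb{B}_C$ and colored graphs in the construction of $F$, so the proof proceeds by a direct colored adaptation of the arguments used in the uncolored setting of Theorem~1.10.
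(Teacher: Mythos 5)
First, a point of context: the paper does not prove this statement — it is quoted verbatim from Johnson--Yau (\cite{JY}, Theorem 1.1) — so there is no internal proof to compare against. Your overall architecture (transfer of the componentwise model structure on $C$-colored $\Sigma$-biobjects along the free--forgetful adjunction, smallness handled by the strong cofibrant generation hypothesis, acyclicity of relative $FJ$-cell complexes handled by Quillen's path object argument, with hypotheses (1) and (2) entering exactly at that last step) is indeed the strategy of the actual proof in \cite{JY}, which in turn adapts Berger--Moerdijk and the uncolored case of \cite{Fre1}.

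However, your construction of the path object under hypothesis (1) is wrong as written. A cocommutative interval $I$ is a counital cocommutative coalgebra factoring the fold map $1\sqcup 1\rightarrowtail I\stackrel{\sim}{\rightarrow}1$. Applying $I\otimes(-)$ entrywise to $P$ does \emph{not} preserve the prop structure: to produce composition maps $(I\otimes P(k,n))\otimes(I\otimes P(m,k))\rightarrow I\otimes P(m,n)$ one would need a multiplication $I\otimes I\rightarrow I$, i.e.\ $I$ would have to be a monoid rather than a comonoid; and even granting such a structure, $I\otimes P$ receives a map from $P\sqcup P$ and maps to $P$, so it is a cylinder object, not the path object $P\stackrel{\sim}{\rightarrow}\mathrm{Path}(P)\twoheadrightarrow P\times P$ that your argument requires. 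The correct construction is the entrywise cotensor $\mathrm{Path}(P)(\underline{c};\underline{d})=Hom_{\mathcal{C}}(I,RP(\underline{c};\underline{d}))$ applied to a fibrant replacement $RP$ (obtained entrywise from the symmetric monoidal fibrant replacement functor, whose monoidality is what keeps $RP$ a prop): the functor $Hom_{\mathcal{C}}(I,-)$ is lax symmetric monoidal via precomposition with the comultiplication $I\rightarrow I\otimes I$, symmetric precisely because that comultiplication is cocommutative, hence it carries colored props to colored props; the counit $I\rightarrow 1$ induces the weak equivalence $RP\rightarrow\mathrm{Path}(P)$ and the two endpoints $1\sqcup 1\rightarrowtail I$ induce the fibration $\mathrm{Path}(P)\twoheadrightarrow RP\times RP$ by the axiom MM1' and the entrywise fibrancy of $RP$ (this is where the cofibrancy of the unit is used). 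A second, smaller overstatement: the path object argument does not show that pushouts of $F(j)$ are pointwise acyclic cofibrations, only that they are pointwise weak equivalences — which is all the transfer principle needs and all that is true in general.
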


\medskip{}

This theorem works in particular with the categories of simplicial sets,
simplicial modules over a commutative ring, and chain complexes over
a ring of characteristic $0$ (our main category in this paper).

\medskip{}

This model structure is similar to that of $1$-colored props, and
we can define $C$-colored endomorphism props of morphisms (see \cite{JY},
Section 4) and more generally of any kind of diagram, so the lifting
properties used in the previous section work in the $C$-colored
case. Moreover, in the proof of Theorem 0.1, the replacement of the
operations $X^{\otimes m}\rightarrow X^{\otimes n}$ by $P(m,n)$
can be done using a $C$-colored prop $P$ instead of a $1$-colored
one without changing anything to the proof, therefore we finally get
the $C$-colored version of Theorem 0.1. We do
not have to change anything to Theorem 0.1, given that $Ch_{\mathbb{K}}$ satisfies
the hypotheses of Theorem 3.4.
The colored version of Corollary 0.3 then follows in the same way.

\section{Afterword}

Let $\mathcal{E}$ be a symmetric monoidal model category over $Ch_{\mathbb{K}}$.
Let $P$ be a cofibrant prop defined in $Ch_{\mathbb{K}}$ and $X$ an object of $\mathcal{E}$.
One can consider the moduli space $P\{X\}$ of $P$-algebra structures on $X$, which is a simplicial
set whose $0$-simplexes are prop morphisms $P\rightarrow End_{X}$
representing all the $P$-algebra structures on $X$.
More precisely, the moduli space of $P$-algebra structures on $X$
is the simplicial set such that
\[
P\{X\}=Mor_{\mathcal{P}_0}(P\otimes\Delta[-],End_X)
\]
where $(-)\otimes\Delta[-]$ is a cosimplicial resolution of $P$.
This space is a Kan complex which is homotopy invariant under weak equivalences of cofibrant props at the source
(it follows from general arguments on simplicial mapping spaces in model categories, see Chapter 16 of \cite{Hir1}).
Moreover, its connected components are exactly the homotopy classes of $P$-algebra structures on $X$.

As a consequence of Theorem 0.1, one can follow arguments
similar to those of Rezk in \cite{Rez} to characterize such a moduli space as a homotopy fiber of a map between classifying spaces.
To be explicit, we have a homotopy pullback of simplicial sets
\[
\xymatrix{P\{X\}\ar[d]\ar[r] & \mathcal{N}(fw(\mathcal{E}^c)^P)\ar[d]\\
\{X\}\ar[r] & \mathcal{N}(w\mathcal{E}^c)
}
\]
where $fw(\mathcal{E}^c)^P$ is the subcategory
of morphisms of $P$-algebras being acyclic fibrations in $\mathcal{E}$.

We will prove in a follow-up paper that the classifying space of acyclic fibrations $\mathcal{N}(fw(\mathcal{E}^c)^P)$
is actually weakly equivalent to the whole classifying space $\mathcal{N}(w(\mathcal{E}^c)^P)$.
We accordingly have a homotopy pullback which extends to the setting of algebras over dg (colored) props
the result obtained by Rezk (Theorem 1.1.5 of \cite{Rez}) in the operadic case
\[
\xymatrix{P\{X\}\ar[d]\ar[r] & \mathcal{N}(w(\mathcal{E}^c)^P)\ar[d]\\
\{X\}\ar[r] & \mathcal{N}(w\mathcal{E}^c)
}
\]

This result implies in particular that the moduli space admits a decomposition in classifying spaces of homotopy automorphisms
\[
P\{X\}\sim \coprod_{[X]} \overline{W}Lw(\mathcal{E}^c)^P(X,X)
\]
where $[X]$ ranges over the weak equivalence classes of $P$-algebras having $X$ as underlying object.
An interesting set-theoretic consequence is that the homotopy automorphisms of $P$-algebras $Lw(\mathcal{E}^c)^P(X,X)$ are homotopically small
in the sense of Dwyer-Kan \cite{DK}.

\bigskip{}

\end{document}